\definecolor{lightgray}{gray}{0.9}
\definecolor{darkgreen}{rgb}{0,0.5,0}
\definecolor{darkblue}{rgb}{0,0.1,0.5}
\newtheoremstyle{introTheorems}
  {\topsep}
  {\topsep}
  {\itshape}
  {0pt}
  {\bfseries}
  {}
  { }
  {\thmname{\bf #1}
  \textnormal{\thmnote{#3}.\!}
  }
\newtheorem{theorem}{Theorem}[section]
\newtheorem{corollary}[theorem]{Corollary}
\newtheorem{definition}[theorem]{Definition}
\newtheorem{acknowledgment}[theorem]{Acknowledgment}
\newtheorem{example}[theorem]{Example}
\newtheorem{lemma}[theorem]{Lemma}
\newtheorem{question}[theorem]{Question}
\theoremstyle{introTheorems}
\newtheorem{introTheorem}{Theorem}
\newcommand{\Rep}{{\rm Rep}}
\newcommand{\cM}{\mathcal{M}}
\newcommand{\V}{\mathcal{V}}
\newcommand{\C}{\mathbb{C}}
\newcommand{\Z}{\mathbb{Z}}
\newcommand{\g}{\mathfrak{g}}
\renewcommand{\sl}{\mathfrak{sl}}
\newcommand{\SL}{\mathrm{SL}}
\newcommand{\osp}{\mathfrak{osp}}
\newcommand{\Y}{\mathcal{Y}}
\renewcommand{\d}{\mathrm{d}}
\newcommand{\rV}{\mathrm{V}} 
\title{Coupling a vertex algebra to a large center}
\author{Boris L. Feigin\footnote{Hebrew University Jerusalem and HSE Moscow}, $\;$ Simon D. Lentner\footnote{University of Hamburg \url{simon.lentner@uni-hamburg.de}}
}
\date{}
\begin{document}

\maketitle

\renewcommand{\O}{\mathcal{O}}
\newcommand{\Sym}{\mathrm{Sym}}
\renewcommand{\det}{\mathrm{det}}
\newcommand{\Aut}{\mathrm{Aut}}

\begin{abstract}
Suppose a Lie group $G$ acts on a vertex  algebra $\V$. In this article we construct a vertex algebra $\tilde{V}$, which is an extension of $\V$ by a big central vertex subalgebra identified with the algebra of functionals on the space of regular $\g$-connections $(\d+A)$. 

The category of representations of $\tilde{\V}$ fibres over the set of connections, and the fibres should be viewed as $(\d+A)$-twisted modules of $\V$, generalizing the familiar notion of $g$-twisted modules. In fact, another application of our result is that it proposes an explicit definition of $(\d+A)$-twisted modules of $\V$ in terms of a twisted commutator formula, and we feel that this subject should be pursued further. 

Vertex algebras with big centers appear in practice as critical level or large level limits of vertex algebras. I particular we have in mind limits of the generalized quantum Langlands kernel, in which case $G$ is the Langland dual and $\V$ is conjecturally the Feigin-Tipunin vertex algebra and the extension $\tilde{\V}$ is conjecturally related to the Kac-DeConcini-Procesi quantum group with big center. With the current article, we can give a uniform and independent construction of these limits. 
\end{abstract}

\vspace{7cm}
Keywords: Vertex algebra, connection, local system, quantum group with big center

\newpage

\tableofcontents

\section{Introduction}

\subsection{Background}

A \emph{vertex algebra} $\V$ is, very roughly speaking, a commutative algebra depending analytically on a coordinate $z$ in the punctured disc $\C^\times$. More precisely, it is a graded vector space with a derivation $T$ and a multiplication map
\begin{align*}
    \Y:\; \V\otimes \V &\to \V[[z,z^{-1}]] \\
    a\otimes b &\mapsto \Y(a,z)b, 
\end{align*}
 taking values in Laurent series in a variable $z$ with coefficients in $\V$. An important axiom is \emph{locality}, which means that $\Y$ is commutative in an analytic sense, and in practice this means a \emph{commutator formula} for the coefficients of $[\Y(a,z),\Y(b,w)]$ depending on the singular terms in $\Y(a,z)b$. 
 
 \bigskip
 
 A zeroth example for a vertex algebra is provided by any commutative algebra $\V$ with a derivation $T$ by the following vertex operator:  
 $$\Y(a,z)b=\sum_{n\geq 0} \frac{z^n}{n!}(T^na)b.$$
 Such vertex algebras are trivial in the sense that there are no singular terms and commutativity holds strictly, but they play a distinct role in the present article as building blocks. A first serious example is the vertex algebra $\mathrm{V}^\kappa(\g)$ associated to an affine Lie algebra $\hat{\g}$ at some fixed level $\kappa$. 
Standard mathematical textbooks on vertex algebras include \cites{Kac97,FBZ04}.
 
 \bigskip

Representations of a vertex algebra form in good cases a braided tensor category. This was anticipated in the physics and analysis context in \cites{MS88,FF88, Verl88, Gab94}. It appeared prominently for affine Lie algebras in \cite{KL93}, which states for generic $\kappa$ a braided tensor equivalence to the category of representations of a quantum group. For arbitrary vertex algebras, the construction of the braided tensor category was made rigorous  under suitable finiteness conditions  in \cite{HL94} and \cite{HLZ06}. 

\bigskip

Frequently, we also encounter vertex algebras $\tilde{\V}$  with a big central subalgebra $\O$. In this case typically the category of representations fibres over the variety $\mathrm{Spec}(\O)$ according to the scalar action $\O$. The variety comes with a distinguished point (i.e. $\O$ is an augmented algebra) and the quotient by the augmentation ideal, or \emph{trivial fibre}, $\V=\tilde{\V}/\O^+$ is  again a vertex algebra. The first prominent examples for such behavior are affine Lie algebras at critical level $\kappa\to -h^\vee$, in which case $\O$ is the algebra of functionals on $\g$-opers \cite{FF92}, see \cite{FBZ04} Proposition~16.8.4. More recently, the large level limits $\kappa\to \infty$ of the generalized quantum Langlands kernels are studied, this is also the topic of our preceding article \cite{FL24}, see references therein. In this case, $\O$ is  the algebra  of functionals on  $\g$-connections (or after reduction again $\g$-opers), and the trivial fibre $\tilde{\V}/\O^+$ is conjecturally the Feigin-Tipunin algebra $\mathcal{W}_p(\g),p\in\mathbb{N}$, whose category of representations is equivalent to the category of representations of a version of the small quantum group $u_q(\g)$ at $2p$-to root of unity \cite{Len25}. 

\bigskip

We expect this to match with the following different approach: Suppose that a Lie group $G$ acts on a vertex algebra $\V$, then we expect a larger category of $(d+A)$-twisted modules fibring over the space of $\g$-connections $\d+A$, and which restrict to usual modules over $\V^G$, as we explained in \cite{FL24b}. To compare  this to a more established setting: Let us restrict ourselves to fibres over regular singular connections $\d+A_0/z+\cdots$, then such a connection is up to gauge equivalence parametrized by the monodromy $g=e^{-A_0}$ at $z=0$, then in this setting we expect a $G$-crossed extension of $\Rep(\V)$ consisting of $g$-twisted $\V$-modules for every $g\in G$ \cite{FLM88}, which are precisely the fibres. To be precise, in terms of the group structure, it is more appropriate to expect a $G^*$-crossed extension for $G^*$ the Poisson dual group. In the quantum group case, we expect representations of the Kac-DeConcini-Procesi quantum group, which has a big center $\O(G^*)$ and zero fibre~$u_q(\g)$. We seem to be missing an algebraic theory of $(\d+A)$-crossed braided tensor categories, probably in a higher categorical sense, see Question \ref{question_higher}.

\bigskip

The goal of the present article is to explicitly construct for any vertex algebra $\V$ with action of $G$ a vertex algebra $\tilde{\V}$ with a big central subalgebra $\O=\O(\g[[z]])$, the algebra of functionals on the set of regular $\g$-connections, such that $\tilde{V}/\O^+=\V$. From this we also gain new insights in the yet-to-be-defined theory of $(\d+A)$-twisted modules. 

In particular we recover the examples of  $(\hat{\sl}_2)_1$ and  of symplectic fermions which we treated in \cite{FL24}. In this previous work, we had constructed $\tilde{\V}$ as large level limit of the GKO-construction $(\hat{\sl}_2)_{\kappa'}\otimes(\hat{\sl}_2)_{\kappa''}$ and of  $\hat{\osp}(2|1)$ as  Hamiltonian reduction of the $N=4$ superconformal algebra. These in turn are special explicit cases of the generalized geometric quantum Langlands kernel for $\sl_2$ and for $p=1$ and $p=2$. As modules over $\V^G$, which is in this case the Virasoro algebra, we find the following behavior: 
\begin{itemize}
    \item For regular connections the Wakimoto modules, the same as for the trivial connection, as expected by gauge transformation.
    \item For regular singular connections $\d+A_0/z+\cdots$ with semisimple $A_0$  the familiar shifts in conformal weights and generic semisimplicity.
    \item For regular singular connections $\d+A_0/z+\cdots$ with nilpotent $A_0$ Jordan blocks in $L_0$ and Verma modules of infinite length.
    \item For irregular connections such as $\d+A_1/z^2+\cdots$ nonzero $L_1,L_2$ eigenvalues and Whittaker modules. 
\end{itemize}

\subsection{Overview}

In Section \ref{sec_commutative} we set up two commutative vertex algebras with several additional structures. The goal is to have, in some sense, a geometric commutative incarnation of the general situation $\tilde{V},V,\O$ that we want. The underlying algebraic geometry is surely not surprising or new to the experts. In the subsequent section we will couple this particular vertex algebra to an arbitrary chosen vertex algebra $\V$ to construct a corresponding vertex algebra $\tilde{V}$ with big center. 

\bigskip

In Section \ref{sec_connections} we consider the commutative vertex algebra $\O(\g[[z]])$, which we interpret as the algebra of functionals on the set of regular connections $\d+A$ with $A\in\g[[z]]$. In addition, it has a Poisson vertex algebra structure, essentially the Lie bracket in $\g$. This vertex algebra also arises as large level limit of $\mathrm{V}^\kappa(\g)$ and the Poisson structure comes from the subleading terms in $\kappa^{-1}$, see \cite{FBZ04} Section 16.

\bigskip

In Section \ref{sec_solutions} we consider the commutative vertex algebra $\O(G[[z]])$, which we interpret as the algebra of functionals on the set of gauge transformations $F\in G[[z]]$ acting on connections by
$$\d+A\mapsto F(\d+A)F^{-1}.$$
In Lemma \ref{lm_correspondenceSpaces} we recall that letting $F$ act on the trivial connection $\d$ gives an bijection between $\g[[z]]$ and $G[[z]]_e$, by which we denote the gauge transformations with $F(0)=e$ the identity. In a slightly different interpretation: $F$ in this bijection is the solution matrix of the differential equation $(\d+A)F=0$, and it can be used to transport the constant solutions of $\d f=0$ to solutions of $(\d+A)f=0$ in any $\g$-representation. Then $F$ is the unique solution of the differential equation up to a choice of initial condition $F(0)\in G$, and conversely any $F$ determines uniquely a connection $\d+A$ with $A=-(\d F)F^{-1}$. Again in slightly different language, $G[[z]]$ is a $G$-bundle over $\g[[z]]$ with a canonical section. 

\bigskip

In Corollary \ref{cor_subalgebra} we state this for the algebras of functionals: There is an isomorphism of $\O(\g[[z]])$ to the $G$-invariant subalgebra of $\O(G[[z]])$:
$$\O(\g[[z]])\cong \O(G[[z]])^G \subset  \O(G[[z]]).$$
In Lemma \ref{lm_VOAmatch} we check that this is compatible with the respective vertex algebra structures, and in Lemma \ref{lm_PoissonModule} we show that the Poisson vertex algebra structure on $\O(\g[[z]])$ can be extended to a Poisson vertex module structure of $\O(G[[z]])$ over $\O(\g[[z]])$. 

\bigskip

In Section \ref{sec_sl2} we discuss all of the above explicitly in the example $\SL_2,\sl_2$. In particular $\O(\g[[z]])$ consists of polynomials in matrix coefficients $a_{-1-k}^*,b_{-1-k}^*,c_{-1-k}^*,d_{-1-k}^*$ and $\O(G[[z]])$ consists of polynomials in matrix coefficients $A_{-1-k}^*,B_{-1-k}^*,C_{-1-k}^*,D_{-1-k}^*$. We explicitly give the embedding of the former as $\SL_2$-invariant quadratic expressions of the latter. 

\bigskip

In Section \ref{sec_automorphisms} we briefly discuss the subgroup of gauge transformations $\Aut(\d+A)\subset G[[z]]$ that fix a given connection. The point is that regular connections have a maximal automorphism group $G$, and that this automorphism group later recovers the action of $G$ on the fibre over the trivial connection.

\bigskip 

In Section \ref{sec_Coupling} we give our main construction, using the structures prepared above:

\bigskip

In Section \ref{sec_CouplingDef} we define for an arbitrary vertex algebra $\V$ with locally finite $G$-action the \emph{vertex algebra with big center}
$$\tilde{V}:=\big(\O(G[[z]])\otimes \V\big)^G.$$
It contains the big central subalgebra $\O(G[[z]])^G\cong \O(\g[[z]])$ as well as $\V^G$.

In Section \ref{sec_CouplingFibre} we define a   linear embedding 
$$\delta:\V\hookrightarrow \tilde{V}$$
and prove that this factors to a linear isomorphism of $\V$ to any single fibre $\tilde{V}/(\phi-\phi(A))$ over any connection $\d+A$. For the zero-fibre, $\delta$ is an isomorphism of vertex algebras. However, over a general connection (or in $\tilde{V}$ altogether)  the additional factor in $\O(G[[z]])$ introduces additional regular terms in the operator product expansion, which modify the singular terms in the operator product expansion of $\V$. One of our main results is the explicit description of the modified commutator formula. This proposes a definition of $(\d+A)$-twisted module. Note that by construction the vertex subalgebra of invariants $\V^G$ is not modified.

\begin{introTheorem}[\ref{thm_twistedRep}]
Let $\d+A$ be a fixed regular connection. Consider a vertex module  
$$\Y_\cM:\,\tilde{V}\otimes \cM\to \cM[[z,z^{-1}]]$$
such that any $\phi$ in the the big central subalgebra $\O(G[[z]])^G\cong \O(\g[[z]])$  acts by the scalar $\phi(A)$. Then the pull-back of $\Y_\cM$ by $\delta$ produces a $\V$-action
$$\Y_\cM:\,V\otimes \cM\to \cM[[z,z^{-1}]]$$
fulfilling the following $(\d+A)$-twisted commutator formula
\begin{align*}
[a_{-m-1}^{\d+A},b_{-n-1}^{\d+A}]
&=\sum_k\sum_{l<0}\left(
\big(\sum_{r+s=-l-1}{-m-1\choose r}A_{-k-1}^{[s]}.a\big)_{-l-1}b
\right)_{-1-(m+n-l-k )}^{\d+A}\\
\intertext{where $A_{-k-1}^{[s]}$ are coefficients of a certain differential polynomial of order $s$ in $A(z)$}
\sum_{k\geq 0}A^{[s]}_{-k-1}z^{k}&:=(\phi\mapsto \phi(F(z)\frac{\d^s}{s!} F(z)^{-1}))
=e^{-\int A(z)\,\d z}\frac{\d^s}{s!}e^{\int A(z)\,\d z}.
\end{align*}

\end{introTheorem}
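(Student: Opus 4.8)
The plan is to read off the bracket from the ordinary vertex-algebra commutator formula inside $\tilde V$ and then specialise to the chosen fibre. By the construction in Section~\ref{sec_CouplingFibre}, the twisted modes are nothing but the modes of the pulled-back field, $a^{\d+A}_{-m-1}=\delta(a)_{-m-1}$ acting on $\cM$. Since $\cM$ is an honest $\tilde V$-module, their bracket obeys the Borcherds commutator formula
\[
[\delta(a)_{-m-1},\delta(b)_{-n-1}]=\sum_{r\ge 0}\binom{-m-1}{r}\big(\delta(a)_{r}\delta(b)\big)_{-m-n-2-r}.
\]
Thus the whole problem reduces to computing the products $\delta(a)_{r}\delta(b)\in\tilde V$ and then evaluating the modes of these elements on $\cM$, where the central subalgebra $\O(G[[z]])^G\cong\O(\g[[z]])$ acts by the scalars $\phi(A)$.

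Next I would expand the product $\delta(a)_r\delta(b)$ using that the vertex operator of $\tilde V=(\O(G[[z]])\otimes\V)^G$ is the $G$-equivariant tensor product of the vertex operator of $\V$ with the commutative vertex operator $\Y(\phi,z)\psi=\sum_{t\ge 0}\frac{z^t}{t!}(T^t\phi)\psi$ of $\O(G[[z]])$. The decisive point is that the $\O(G[[z]])$-factor only ever contributes \emph{regular} powers of $z$; hence $\delta(a)_r\delta(b)$ is not $\delta(a_r b)$, but carries correction terms in which the derivatives $T^t$ of the $\O(G[[z]])$-coordinates are contracted, through the differentiated $G$-action, against $a$. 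I would sort these corrections by the derivative order $s$ and rewrite each as $\delta$ of a singular product in $\V$ with $a$ replaced by a $\g$-modified vector.

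The differential polynomials $A^{[s]}$ then appear upon specialising to the fibre. Fixing the central character $\phi\mapsto\phi(A)$ is the same as fixing the solution matrix $F$ of $(\d+A)F=0$, i.e.\ $F(z)=e^{-\int A\,\d z}$, by Lemma~\ref{lm_correspondenceSpaces}. Under this specialisation the regular contributions isolated above are exactly the $z$-expansion of the conjugated formal operators $F\frac{\d^s}{s!}F^{-1}$, whose coefficients are the $A^{[s]}_{-k-1}$ acting on $a$ via the $\g$-action as $A^{[s]}_{-k-1}.a$. Combining the binomial $\binom{-m-1}{r}$ of the commutator formula with the order-$s$ derivative forces $r+s=-l-1$, so the correction reassembles into the singular $\V$-product $(\,\cdot\,)_{-l-1}b$ with $-l-1\ge 0$; the accompanying power $z^k$ together with the derivative order produces the net mode shift giving the outer index $-1-(m+n-l-k)$, and summing over the free indices $k$ and $l<0$ yields the stated formula. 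As a consistency check, for $a\in\V^G$ the $\g$-action annihilates every $s\ge 1$ term and only $A^{[0]}=\mathrm{id}$ survives, returning the ordinary commutator formula, in agreement with $\V^G\subset\tilde V$ being undeformed.

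The conceptual skeleton above is short, and I expect the real work—and the main obstacle—to be the combinatorial bookkeeping of the last two steps: verifying that the $T^t$-derivatives of the $\O(G[[z]])$-coordinates, weighted by the $z^t/t!$ of the commutative vertex operator and the binomials of the commutator formula, resum \emph{precisely} into the conjugation $F\frac{\d^s}{s!}F^{-1}$ and not some neighbouring differential polynomial, and that the various mode shifts align to give $-1-(m+n-l-k)$. Convergence is not a concern: local finiteness of the $G$-action makes $A^{[s]}_{-k-1}.a=0$ for $s\gg 0$, and $a_{r}b=0$ for $r\gg 0$ bounds the inner products, so all sums are finite. In parallel I would try a cleaner, field-theoretic route that sidesteps some indices: since $\delta$ is a genuine vertex-algebra isomorphism onto the zero fibre, I would conjugate the zero-fibre field $\Y(a,z)$ by the $z$-dependent gauge transformation $F$, let $A^{[s]}$ emerge directly as the resulting gauge cocycle, and only at the very end extract modes to recover the commutator.
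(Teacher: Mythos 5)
Your proposal is correct and follows essentially the same route as the paper's proof: pull back along $\delta$, apply the ordinary Borcherds commutator formula for the $\tilde{\V}$-module $\cM$, use that the commutative factor $\O(G[[z]])$ has no singular products so it only contributes regular corrections, and use $G$-equivariance of the products in $\V$ to pull the $F$-dependence out as central coefficients that evaluate on the fibre to the gauge cocycle $F\frac{\d^s}{s!}F^{-1}=\sum_k A^{[s]}_{-k-1}z^k$. The only difference is bookkeeping order — the paper first decomposes the modes $\delta(a)_{-m-1}=\sum_i (F^{-1}_i.a)_{-(m-i)-1}$ and applies the commutator formula componentwise, so that $\binom{-m-1}{r}$ emerges via a Vandermonde resummation, whereas you apply the commutator formula directly and shift the same work into computing $\delta(a)_r\delta(b)$ in the tensor product and the module action (translation) of the central coefficients — which are equivalent reorganizations of the identical calculation.
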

For example, if the operator product expansion of $a,b$ and all its images under $G$ in $\V$ have at most a second order pole ($l=-2$) then the modified commutator formula reads
\begin{align*}
[a_{-m-1}^{\d+A},b_{-n-1}^{\d+A}]
&=\sum_{k\geq 0}\left( \big((-m-1)\delta_{k=0}+A_{-k-1}\big).a\big)_{1}b
\right)_{-1-(m+n-1-k )}^{\d+A}.
\end{align*}
This corresponds to the formula encountered in \cite{CGL20} and our preceding article \cite{FL24} for symplectic fermions using computations in the large level limit of the generalized geometric quantum Langlands kernel. In the present article we see that for higher poles $l<-2$ the formula is considerably more complicated then we expected.  

\bigskip

In Section \ref{sec_goingback} we explain how, vice versa, for any vertex algebra $\tilde{V}$ with a big central subalgebra $\O=\O(\g[[z]])$ and a compatible  Poisson vertex algebra structure on $\O$ and Poisson module structure on $\tilde{V}$, the gauge transformations produce an action of $G$ on the zero fibre by vertex algebra automorphisms, and for our construction we check that this recovers from $\tilde{\V}$ the initially given action of $G$ on $\V$.

\bigskip

In Section \ref{sec_beyond} we make some steps beyond regular connections, which is the actually interesting case: 

\bigskip

In Section \ref{sec_beyondSolutions} we discuss how for a singular connection $\d+A$ the set of all solutions (which are multivalued) gives again a bundle over the set of connections, but there appear now complications with choosing a section.

\bigskip

In Section \ref{sec_beyondTwisted} we revisit the commutative vertex operator of $\O(\g[[z]])$ and $\O(G[[z]])$ in a more geometric interpretation, using re-expansions. It becomes seemingly clear that the functions over singular connections lead to twisted modules of the vertex algebra $\O(G[[z]])$. We also note that the twisted commutator formula in the previous section seems to continue to hold in this case. In particular it reproduces the correct definition of a $g$-twisted module, which corresponds to the regular singular connection $\d+A_0/z$ with $g=e^{-A_0}$, in the version for possibly non-semisimple $g$ in \cite{Bak16}.

\bigskip

In Section \ref{sec_Doublet} we make our construction concrete for $\SL_2,\sl_2$ and for a vertex algebra~$\V$ generated by a pair of generators $x,y$ which form the standard representation (a so-called doublet vertex algebra). We explicitly compute the images $\delta(x),\delta(y)$. Then we show explicitly how the operator product expansion of an example with second order pole is modified. In particular we recover the examples of  $(\hat{\sl}_2)_1$ and  of symplectic fermions which we treated in \cite{FL24}.

\subsection{Questions}

Repeating our main question on the theory side from \cite{FL24b}:
\begin{question}\label{question_higher}
There seems to be no available algebraic notion of $(\d+A)$-crossed categories for $\d+A$ a connection on the formal punctured disc, and a tensor product related to connections on the $3$-punctured sphere. If we restrict ourselves to regular singular connections, and work up to gauge transformations, this should recover the theory of $G$-crossed braided tensor categories.  

Closest to us seems the approach in \cite{Gait15} in the framework of Lurie's derived algebraic geometry and also motivated from Langlands theory. Gaitsgory sets up a notion of a sheaf of categories over the prestack of local systems. One might expect that working up to gauge transformations hides the higher categorical structure present in this approach.    
\end{question}
\begin{question}
 One would like a theory of $(\d+A)$-twisted modules  of a vertex algebra (with the twisted Jacobi identity obtained in this article). For regular singular connections and up to gauge transformations, this should reduce to the familiar concept of $g$-twisted modules and, going from familiar abelian finite groups to  nonabelian algebraic groups, to a $G^*$-crossed category. 
\end{question}

\begin{question}
If the action of $G$ on $\V$ is inner, then the  twisted modules should all be equivalent to the untwisted modules by using $\Delta$-deformation \cites{DLM96, Li97, AM09}. In this case our construction of $\tilde{V}$ can be expressed as kernel of screening operators. However, we find unclear what the consequences of this setup are. 
\end{question}

\begin{question}
For connections with singularity at $z=0$ the situation is more complicated, partly because the initial value $F(0)$ has to be replaced. For regular singular connections one usually has a distinguished basis of solutions with prescribed monodromy. For irregular connections one uses asymptotic behavior $z\to 0$, but this definition depends on sectors around $z=0$, called Stokes sectors, and the inherent ambiguity is related to the Stokes phenomenon, see for example our short review in  \cite{FL24b}. While we had \emph{not} encountered phenomena associated to Stokes data in this past article (everything there only depends on the formal type of connection), in Section \ref{sec_beyond} it seems that we precisely encounter this ambiguity as a choice of basis for our twisted vertex algebra module, 
\end{question}

\begin{acknowledgment}
    We thank T. Creutzig and T. Dimofte for long stimulating discussions about these questions, and S. Möller and C. Raymond for several helpful comments and questions. 
    
    Both authors thank the Humboldt foundation for past support (such as the Feyodor Lynen fellowship and the Humboldt research prize) and for supporting a short stay in Jerusalem, where much work on this article was done. SL also thanks the SFB 1624 (Higher structures, moduli spaces and integrability) for partial support.
\end{acknowledgment}

\section{A commutative vertex algebra}\label{sec_commutative}

We apologize to those more fluent in algebraic geometry for always fixing a coordinate $z$ and a connection $\d$. We will always explicitly treat the case $\SL_2$, $\sl_2$ as example.

\subsection{Functions on the space of connections}\label{sec_connections}

Let $\g$ be a semisimple complex finite-dimensional Lie algebra. We consider the set of  $\g$-connections on the formal punctured disc $\d+\sum_{n\in\Z} A_nz^{-1-n}$ with $A_n\in \g$. If $A_n=0$ for $n\geq 0$ then we call the connection connection \emph{regular} in $z=0$, if $A_n=0$ for $n>0$ then we call the connection \emph{regular singular} in $z=0$, otherwise we call it \emph{irregular} in $z=0$. 

\begin{definition}\label{def_Og}
We denote by $\O(\g[[z]])$ the algebra of functionals on the space of formal series $\g[[z]]$. We identify it with the algebra of functionals on the space of regular connections $\d+A$ with $A\in\g[[z]]$.  
\end{definition}
Explicitly, it is the free algebra in the generators $\phi_n$ for $n<0$ and where $\phi$ runs through a basis of $\g^*$. We can identify $\g^*$ with $\g$ itself via the invariant form $(-,-)$ and denote the functionals by $a^*=(a,-)$ in $\g^*$ and $a^*_n$ in $\O(\g[[z]])$ respectively. 

\begin{lemma}
The algebra $\O(\g[[z]])$ becomes a commutative vertex algebra with the assignment 
$$\Y(\phi_{-1},z)=\sum_{n\geq 0} \phi_{-n-1}z^{n}
=\sum_{n\geq 0} \frac{T^n}{n!}\phi_{-1}z^{n},$$
where the derivation $T$ is defined by $T\phi_{-n-1}=(n+1)\phi_{-n}$.
Moreover, it carries the structure of a Poisson vertex algebra with 
$$\{a_{-1}{_\lambda} b_{-1}\}=[a,b]_{-1}$$
\end{lemma}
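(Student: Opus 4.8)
The plan is to treat the two assertions separately, since the first is an instance of a general principle and the second reduces to the axioms of $\g$. For the commutative vertex algebra structure I would invoke the equivalence recalled in the introduction: any commutative unital algebra equipped with a derivation $T$ carries a commutative vertex algebra structure via $\Y(a,z)b=\sum_{n\ge 0}\frac{z^n}{n!}(T^na)b$, with all axioms — vacuum, translation covariance, and the trivially satisfied locality — holding automatically. Thus it suffices to produce the pair $(\O(\g[[z]]),T)$. First I would check that $T$ is a well-defined derivation: one prescribes it on the free generators by the relation forced by the displayed vertex operator, namely $T^n\phi_{-1}=n!\,\phi_{-n-1}$, equivalently $T\phi_{-n-1}=(n+1)\phi_{-n-2}$, and extends it to the polynomial algebra by the Leibniz rule, where no consistency condition arises because the algebra is free commutative. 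A one-line induction on $n$ then verifies $\tfrac{T^n}{n!}\phi_{-1}=\phi_{-n-1}$, identifying the given series $\Y(\phi_{-1},z)=\sum_{n\ge0}\phi_{-n-1}z^n$ with the canonical vertex operator above; the remaining axioms are inherited from the general construction.

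For the Poisson vertex algebra structure, recall that such a structure on a differential algebra amounts to a $\lambda$-bracket $\{-_\lambda-\}$ valued in $\O(\g[[z]])[\lambda]$ satisfying sesquilinearity $\{Ta_\lambda b\}=-\lambda\{a_\lambda b\}$ and $\{a_\lambda Tb\}=(\lambda+T)\{a_\lambda b\}$, skew-symmetry $\{a_\lambda b\}=-\{b_{-\lambda-T}a\}$, the Jacobi identity, and the left and right Leibniz (biderivation) rules with respect to the commutative product. The key tool is the standard extension result for $\lambda$-brackets (the ``master formula'', e.g. Barakat--De Sole--Kac): a bracket defined on a generating set extends uniquely, by sesquilinearity and the Leibniz rules, to a well-defined Poisson vertex algebra structure provided the four axioms are verified on generators. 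I would therefore set $\{a_{-1}{}_\lambda b_{-1}\}=[a,b]_{-1}$ for $a,b\in\g$ — note this is constant in $\lambda$, the would-be $\lambda$-linear central term having dropped out in the commutative limit — and propagate it by the extension procedure.

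It then remains to verify the axioms on generators, which reduces everything to the structure of $\g$. Skew-symmetry is immediate: since the fundamental bracket is $\lambda$-independent, $\{b_{-1}{}_{-\lambda-T}a_{-1}\}=[b,a]_{-1}$, and $-[b,a]_{-1}=[a,b]_{-1}$. For the Jacobi identity on three generators $a_{-1},b_{-1},c_{-1}$, the left-hand side is $[a,[b,c]]_{-1}-[b,[a,c]]_{-1}$ and the right-hand side is $[[a,b],c]_{-1}$, which agree by the Jacobi identity of $\g$; again all $\lambda,\mu$-dependence is trivial because the fundamental bracket has degree zero in $\lambda$. The Leibniz rules hold by construction.

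The main obstacle is the bookkeeping hidden in the phrase ``provided the axioms hold on generators'': one must be confident that sesquilinearity and the biderivation rules genuinely propagate the generator-level skew-symmetry and Jacobi identity to arbitrary modes $a_{-m-1},b_{-n-1}$ and to products. I expect the delicate point to be tracking the $T$-action through sesquilinearity when expanding the Jacobi identity for mixed higher modes; this is exactly what the cited master formula packages. I would either invoke it directly or, for a self-contained treatment, carry out the induction on the modes, which is routine precisely because the fundamental bracket is constant in $\lambda$.
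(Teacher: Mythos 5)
Your proof is correct, and it establishes exactly what the lemma states, but by a genuinely different route from the paper. The paper gives no axiomatic verification at all: immediately after the lemma it realizes $\O(\g[[z]])$ as the large-level limit $\kappa\to\infty$ of the affine vertex algebra $\rV^\kappa(\g)$ (citing Frenkel--Ben-Zvi, Chapter 16). After rescaling the generators to $a_n/\kappa$, all commutators are of order $\kappa^{-1}$, so the limit is a commutative vertex algebra, and the subleading term in $\kappa^{-1}$ defines the Poisson $\lambda$-bracket; the formulas of the lemma are then read off by identifying $a_n/\kappa$ with the functional $a^*_n$. Your argument is instead intrinsic and self-contained: the commutative vertex algebra structure comes from the general ``commutative algebra with derivation'' construction (which the paper itself recalls in the introduction as its zeroth example), and the Poisson structure from the Barakat--De Sole--Kac extension theorem for $\lambda$-brackets on differential polynomial algebras, with skew-symmetry and Jacobi checked on generators, where they reduce to antisymmetry and the Jacobi identity of $\g$. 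The paper's route is conceptually informative --- it exhibits the origin of both structures and matches the article's theme that such algebras arise as limits of quantum objects --- while your route avoids any appeal to affine Lie algebra theory and verifies the bracket precisely as stated.

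Two remarks. First, you silently corrected a typo, and rightly so: compatibility with $\Y(\phi_{-1},z)=\sum_{n\geq 0}\frac{T^n}{n!}\phi_{-1}z^n=\sum_{n\geq 0}\phi_{-n-1}z^n$ forces $T\phi_{-n-1}=(n+1)\phi_{-n-2}$, not $(n+1)\phi_{-n}$ as printed. Second, your aside that the ``$\lambda$-linear central term'' has ``dropped out in the commutative limit'' is not quite accurate: in the semiclassical limit of $\rV^\kappa(\g)$ the central term survives in the $\lambda$-bracket as $\lambda(a,b)$ --- this is the $\delta_{m+n=0}(a,b)\lambda$ term in the paper's own display --- and it becomes invisible only in mode brackets $\{a_m,b_n\}$ with both $m,n<0$, which are the modes generating $\O(\g[[z]])$. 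This does not affect the validity of your proof, since the $\lambda$-independent bracket is itself a consistent Poisson vertex algebra structure and is the one the lemma asserts; but it is the one point where the limit structure and the lemma's bracket genuinely differ as $\lambda$-brackets, so the aside should be dropped or rephrased.
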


Commutative Poisson vertex algebras arises as the semiclassical limit $\kappa\to \infty$ of vertex algebras, which become commutative in the limit and first order term in $\kappa^{-1}$ defines a Poisson structure, see for example \cite{FBZ04} Chapter 16. Explicitly, $\O(\g[[z]])$ is the large level limit  $\kappa\to \infty$ of 
the vertex algebra $\rV^\kappa(\g)$: For the choice of $\Z[\kappa^{-1}]$-generators $a_n/\kappa$ of $\hat{\g}$ the commutator relation is 
$$[a_n/\kappa,a_m/\kappa]=\kappa^{-1} [a_n,b_m]_{n+m}+\kappa^{-1} \delta_{m+n=0}(a,b).$$ 
This shows that the limit $\kappa\to \infty$ is well-defined and commutative. Moreover, the limit carries the following structure of a Poisson vertex algebra
$$\{a_n/\kappa_\lambda b_m/\kappa\}=[a_n,b_m]+\delta_{m+n=0}(a,b)\lambda.$$
If we identify $a_n/\kappa$ in the limit with the functional $a^*_n\in \O(\g[[z]])$, then this coincides with the Poisson structure given in Definition \ref{def_Og}.

\subsection{Functions on the space of solutions}
\label{sec_solutions}

For $G$ an algebraic group we consider the group $G[[z]]$ of $G$-valued functions on the formal disc. For matrix groups $G\subset \mathrm{GL}_n$ we can write explicitly $F=\sum_{n\geq 0}F_n z^n$ with $F_n$ matrices such that $F(z)$ fulfills the polynomial identity defining elements in $G$, as a function in $z$. For general groups we can make sense of $F_{n}$ at least on any $G$-representation $V$, and on the level of functionals as we do in formula \eqref{formula_Fn}. 

We denote by $G[[z]]_{e}$ the subgroup of functions with $F(0)=e$, the unit in $G$. There is obviously a split short exact sequence of groups
\begin{align}\label{formula_exactsequence}
G[[z]]_{e} \hookrightarrow
G[[z]] \stackrel{z=0}{\rightleftarrows} G.
\end{align}
As spaces, we have a $G$-bundle $G[[z]]$ over $G[[z]]_e$ by right-multiplication, with a canonical section. 

\bigskip

We can interpret $F\in G[[z]]$ as gauge transformations acting on connections $\d+A$ with $A\in \g[[z]]$ via 
\begin{align}\label{formula_Gauge}
\d+A\mapsto F(\d+A)F^{-1}=\d+(-(\d F)F^{-1}+FAF^{-1})
\end{align}
In particular, it is known and not difficult to see that any regular connections can be obtained as gauge transformation from the trivial connection $\d$, meaning $A=-(\d F)F^{-1}$. This condition can be written as a differential equation for $F$ 
\begin{align}\label{formula_DGL}
F^{-1}\d F+F^{-1}AF&=0
\end{align}
or more suggestively $\d F+AF=0$. Then $F$ can also be interpreted as the solution matrix of this differential equation $(\d+A)f=0$ for $f\in V[[z]]$ in any $G$-representation $V$.   

\begin{lemma}\label{lm_correspondenceSpaces}
The space $\g[[z]]$ is a torsor over $G[[z]]_e$, or explicitly, we have a bijection $G[[z]]_e\cong \g[[z]]$. Here, $F$ is sent to image $\d+A$ of $\d$ under this gauge transformation, which is $A=-(\d F)F^{-1}$ as above, and conversely any connection $\d+A$ is sent to the unique solution of the differential equation \eqref{formula_DGL}
with initial value $F(0)=e$.
\end{lemma}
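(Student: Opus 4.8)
The plan is to write down the two maps in the statement explicitly and check that they are mutually inverse, the only substantive point being that the inverse map genuinely lands in $G[[z]]$. Since $\g$ is semisimple, $G$ is a linear algebraic group, so I may fix a faithful representation $G\hookrightarrow\mathrm{GL}_n$ and regard $F=\sum_{n\geq0}F_nz^n$ as a matrix-valued series with $\g\subseteq\mathfrak{gl}_n$; nothing is lost. For the forward map $F\mapsto A=-(\d F)F^{-1}$, the constant term $F_0=e$ is invertible, so $F^{-1}\in\mathrm{GL}_n[[z]]$ exists and $A$ is a well-defined power series. To see that $A\in\g[[z]]$ and not merely $\mathfrak{gl}_n[[z]]$, I would note that $(\d F)F^{-1}$ is the pullback along $F$ of the Maurer--Cartan form: order by order in $z$ the tangent vector $\d F$ lies in $T_FG=\g\cdot F$ because $F$ is a $G$-valued series, so the logarithmic derivative is $\g$-valued. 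Equivalently, one differentiates the defining equations of $G$ in $z$.

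For the inverse I would first solve \eqref{formula_DGL}, rewritten as $\d F=-AF$, as a formal power series. Writing $A=\sum_{n\geq0}A_nz^n$ and comparing coefficients yields the recursion $(n+1)F_{n+1}=-\sum_{i+j=n}A_iF_j$, which together with $F_0=e$ determines every $F_n$ uniquely. This produces a unique $F\in\mathrm{GL}_n[[z]]$ with $F(0)=e$, and in particular settles uniqueness.

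The main obstacle, and the only step that really uses $A\in\g[[z]]$, is to promote this solution from $\mathrm{GL}_n[[z]]$ to $G[[z]]$. Let $I=I(G)\subseteq\C[\mathrm{GL}_n]$ be the (finitely generated) vanishing ideal, with generators $f_1,\dots,f_m$, and set $h_i(z):=f_i(F(z))\in\C[[z]]$; I want each $h_i$ to vanish. The key input is that, because $-A$ takes values in $\g=\mathrm{Lie}(G)$, the vector field $\Xi\colon x\mapsto(-A)x$ on $\mathrm{GL}_n$ (with coefficients in $\C[[z]]$) is tangent to $G$ and hence preserves $I$. Applying the chain rule and $\d F=-AF$ gives $\d h_i=(\Xi f_i)(F)$, and writing $\Xi f_i=\sum_j g_{ij}f_j$ with $g_{ij}\in\C[\mathrm{GL}_n]\otimes\C[[z]]$ turns this into a closed homogeneous linear system $\d h_i=\sum_j g_{ij}(F)\,h_j$ with initial condition $h_i(0)=f_i(e)=0$. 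A homogeneous linear ODE over $\C[[z]]$ with vanishing initial value has only the zero solution (again by the coefficient recursion), so all $h_i\equiv0$ and $F(z)\in G[[z]]$; thus $F\in G[[z]]_e$. I expect the one point needing care here is the tangency claim $\Xi(I)\subseteq I$, which is the standard fact that the field $x\mapsto Xx$ for $X\in\mathrm{Lie}(G)$ generates left translations by $\exp(tX)\in G$ and so preserves $G$, extended $\C[[z]]$-linearly over the coefficients $A_n\in\g$.

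Finally, the two maps are mutually inverse essentially by construction, since $A=-(\d F)F^{-1}$ is equivalent to $\d F=-AF$: starting from $A$, solving and taking the logarithmic derivative returns $A$, while starting from $F$ the solution with that initial value is $F$ by uniqueness. The torsor statement then follows formally: $G[[z]]_e$ acts on regular connections by the gauge transformations \eqref{formula_Gauge}, the orbit map $F\mapsto F\cdot\d=\d-(\d F)F^{-1}$ is exactly the bijection just established, and the action is free because $F\cdot\d=\d$ forces $\d F=0$, i.e. $F=F(0)=e$.
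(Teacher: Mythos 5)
Your proof is correct, and in fact it supplies something the paper does not: the authors state Lemma \ref{lm_correspondenceSpaces} without proof, remarking only that ``it is known and not difficult to see'' that every regular connection is a gauge transform of $\d$, so there is no proof in the paper to compare against. Your write-up isolates exactly the two points where the content lies. First, that the logarithmic derivative $-(\d F)F^{-1}$ of a $G$-valued series is $\g$-valued, which your Maurer--Cartan/tangency argument handles correctly (and which the paper also uses silently in Lemma \ref{lm_VOAmatch} and in Section \ref{sec_sl2}). Second, and this is the step a careless reader would skip, that the unique formal solution of $\d F=-AF$, $F(0)=e$, which a priori lives only in $\mathrm{GL}_n[[z]]$, actually lies in $G[[z]]$: your argument via the derivation $x\mapsto -Ax$ preserving the vanishing ideal $I(G)$ (because each coefficient $A_n$ lies in $\mathrm{Lie}(G)$), followed by the observation that the resulting homogeneous linear system $\d h_i=\sum_j g_{ij}(F)h_j$ with $h_i(0)=0$ has only the zero solution over $\C[[z]]$ by the coefficient recursion, is the standard and fully rigorous way to do this. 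One small point you leave implicit: for the torsor formulation you need that the gauge action of $G[[z]]_e$ on connections in formula \eqref{formula_Gauge} actually preserves the set of \emph{regular} connections, i.e.\ that $FAF^{-1}\in\g[[z]]$ as well as $-(\d F)F^{-1}\in\g[[z]]$; the former follows from $\mathrm{Ad}$-invariance of $\g$ under $G$, by the same tangency considerations, and is worth a sentence. With that added, your argument is a complete proof of the lemma, and the reduction to a faithful matrix representation is harmless since the paper assumes $G$ algebraic (indeed it works with matrix coefficients throughout).
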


We now formulate this observation for rings of functionals:

\begin{corollary}\label{cor_subalgebra}
There is an isomorphism of algebras
$$\O(\g[[z]])\cong \O(G[[z]]_e) \cong \O(G[[z]])^G,$$
where the $G$-invariants are taken with respect to the right regular action. 
\end{corollary}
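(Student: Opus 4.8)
The plan is to prove the two isomorphisms separately. The first, $\O(\g[[z]])\cong\O(G[[z]]_e)$, is obtained by dualizing the bijection of Lemma~\ref{lm_correspondenceSpaces}; the second, $\O(G[[z]]_e)\cong\O(G[[z]])^G$, is the assertion that $G[[z]]_e$ is the quotient of $G[[z]]$ by the right regular action of the constant subgroup $G$. These then compose to give the Corollary.

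First I would make the right regular $G$-action explicit: $G$ is embedded in $G[[z]]$ as the constant functions (the section in \eqref{formula_exactsequence}) and acts by right multiplication $F\mapsto Fg$. The map $\pi\colon G[[z]]\to G[[z]]_e$, $F\mapsto F\,F(0)^{-1}$, is well defined, $G$-invariant, and admits the inclusion $G[[z]]_e\hookrightarrow G[[z]]$ as a section; its fibres are exactly the $G$-orbits. Concretely, $F\mapsto\big(F\,F(0)^{-1},\,F(0)\big)$ is an isomorphism of schemes $G[[z]]\cong G[[z]]_e\times G$ under which the $G$-action becomes right translation on the second factor alone. Dualizing gives $\O(G[[z]])\cong\O(G[[z]]_e)\otimes\O(G)$ as $G$-algebras, with $G$ acting only on $\O(G)$ by right translation, and since a function on $G$ invariant under all right translations is constant (evaluate $f(xg)=f(x)$ at $x=e$), we have $\O(G)^G=\C$. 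Taking invariants then yields $\O(G[[z]])^G\cong\O(G[[z]]_e)\otimes\O(G)^G=\O(G[[z]]_e)$.

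For the first isomorphism I would note that both directions of the bijection in Lemma~\ref{lm_correspondenceSpaces}, namely $F\mapsto A=-(\d F)F^{-1}$ and its inverse solving \eqref{formula_DGL} with $F(0)=e$, are given coefficient-wise by polynomial (indeed differential-polynomial) expressions. Hence the bijection is an isomorphism of (pro-affine) schemes and its pullback is an algebra isomorphism $\O(\g[[z]])\cong\O(G[[z]]_e)$. Composing this with the isomorphism of the preceding paragraph yields the statement.

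The main point requiring care is the precise meaning of \emph{algebra of functionals} on these infinite-dimensional spaces, together with the commutation of invariants with the tensor factor. I would handle this by working level by level: $G[[z]]$, $G[[z]]_e$, and $\g[[z]]$ are projective limits of finite-type affine varieties truncating at order $z^N$, the maps above respect these truncations, and classical invariant theory applies at each finite level. The identity $(\O(G[[z]]_e)\otimes\O(G))^G=\O(G[[z]]_e)\otimes\O(G)^G$ is then immediate, because $\O(G[[z]]_e)$ is a free (polynomial) algebra, hence a direct sum of copies of $\C$ as a vector space, and taking $G$-invariants commutes with direct sums. No reductivity or connectedness hypothesis on $G$ is needed, since the crucial fact $\O(G)^G=\C$ holds for the right regular action of any group.
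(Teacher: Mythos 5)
Your proof is correct and takes essentially the same route the paper relies on implicitly: the corollary is stated there as a direct reformulation of the bijection in Lemma~\ref{lm_correspondenceSpaces} together with the split exact sequence \eqref{formula_exactsequence}, i.e.\ the trivialization $G[[z]]\cong G[[z]]_e\times G$ with $G$ acting by right translation on the second factor --- exactly your two steps. Your write-up simply supplies the details the paper leaves to the reader (coefficient-wise polynomiality of both directions of the correspondence, $\O(G)^G=\C$, and the commutation of invariants with the tensor factor).
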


We now consider a commutative vertex algebra structure on $\O(G[[z]])$ as follows: Any functional $\Phi$ on $G$ can be upgraded to a functional $\Phi_{-1}$ on $G[[z]]$ by setting $\Phi_{-1}(F)=\Phi(F(0))$. Moreover, by expanding $\Phi(F(z))$ we define 
\begin{align}\label{formula_Fn}\Phi_{-n-1}(F)=\frac{1}{n!}\frac{\partial^n}{(\partial z)^n}\Phi(F(z))|_{z=0}
\end{align}
We define the vertex operator
$$\Y(\Phi_{-1},z)=\sum_{n\geq 0} \Phi_{-n-1}z^n$$
Of course this can be made more explicit for matrix groups, such as  $\SL_2$ above. For example, we can see directly that this vertex operator is compatible with the relations obtained as coefficients of $\det(F(z))=0$, in fact it is enough to see it is compatible with the derivation $T$ sending $\phi_{-n-1}$ to $(n+1)\phi_{-n}$.

\begin{lemma}\label{lm_VOAmatch}
    The $G$-invariant  vertex subalgebra of $\O(G[[z]])$ is $\O(\g[[z]])$ with the vertex algebra structure given in the previous section.
\end{lemma}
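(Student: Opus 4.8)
The plan is to verify the claimed identification of the $G$-invariant vertex subalgebra in two independent directions: first as an isomorphism of commutative (associative) algebras, which is already provided by Corollary \ref{cor_subalgebra}, and second as an identification of the additional vertex-algebra data, namely the translation operator $T$ (equivalently the full vertex operator $\Y(-,z)$ built from $T$). Since both $\O(\g[[z]])$ and $\O(G[[z]])$ are \emph{commutative} vertex algebras, their vertex operators are completely determined by the derivation $T$ via $\Y(\phi_{-1},z)=\sum_{n\geq 0}\frac{T^n}{n!}\phi_{-1}z^n$. Hence the entire lemma reduces to the following two assertions: (i) the subalgebra $\O(G[[z]])^G$ is preserved by the derivation $T$ on $\O(G[[z]])$, so that it is genuinely a vertex \emph{subalgebra}; and (ii) under the algebra isomorphism of Corollary \ref{cor_subalgebra}, the derivation $T$ on $\O(G[[z]])$ restricts to the derivation $T$ on $\O(\g[[z]])$ defined in Definition~\ref{def_Og} by $T\phi_{-n-1}=(n+1)\phi_{-n}$.

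First I would pin down what $T$ is on each side generator-by-generator. On $\O(G[[z]])$ the definition \eqref{formula_Fn} sets $\Phi_{-n-1}(F)=\frac{1}{n!}\partial_z^n\Phi(F(z))|_{z=0}$, and the derivation sending $\Phi_{-n-1}\mapsto (n+1)\Phi_{-n}$ is precisely the operator implementing the $z$-shift, i.e. it corresponds to differentiation in the formal expansion variable. The key structural observation is that this $T$ is intrinsically defined from the $z$-dependence of $F$ and the right $G$-action is the \emph{constant} (in $z$) right multiplication $F(z)\mapsto F(z)g$, so the two operations commute: differentiating in $z$ and then restricting to the invariant functionals is the same as restricting first and differentiating after. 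This immediately gives (i), that $T$ preserves $\O(G[[z]])^G$. I would phrase this cleanly by noting that $T$ is the coefficient-extraction operator dual to the shift $F(z)\mapsto F(z+\text{-})$, which manifestly commutes with the constant right $G$-action.

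Next, for (ii), the compatibility of the two derivations under the isomorphism, I would trace through the explicit correspondence $\g[[z]]\cong G[[z]]_e$ of Lemma \ref{lm_correspondenceSpaces}, where a connection $\d+A$ with $A=-(\d F)F^{-1}$ is matched with its solution matrix $F$. The cleanest route is to check the statement on the level of generating series rather than individual coefficients: under the identification $a^*_n\leftrightarrow a^*_n$ (invariant quadratic expressions in the matrix coefficients, as spelled out for $\SL_2$ in Section \ref{sec_sl2}), both derivations act as the formal $z$-derivative on the respective generating series $A(z)$ and $F(z)$. Because $T$ on both sides is simply formal differentiation with respect to $z$ (shifting the grading index by one and multiplying by $n+1$), and because the algebra isomorphism is induced by the $z$-dependent geometric correspondence $A=-(\d F)F^{-1}$ which is itself $z$-equivariant, the two derivations must agree. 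Concretely one differentiates the relation $A=-(\d F)F^{-1}$ in $z$ and matches coefficients; since the isomorphism of Corollary \ref{cor_subalgebra} is already established as an algebra map, it suffices to verify agreement on the generators, which reduces to the single relation above.

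The main obstacle I anticipate is purely bookkeeping rather than conceptual: making the identification of $T$ with formal $z$-differentiation precise on $\O(G[[z]])$ for a general (not necessarily matrix) group $G$, where the coefficients $F_n$ and the functionals $\Phi_{-n-1}$ are only defined via the expansion \eqref{formula_Fn} on representations. For matrix groups the whole computation is transparent and can be done explicitly (as previewed for $\SL_2$), and one can then argue the general case by embedding into some $\mathrm{GL}_n$ via a faithful representation, or by working functorially with the Hopf-algebraic structure of $\O(G)$. I would therefore first dispatch the matrix-group case directly, where $T$ on both $A(z)$ and $F(z)$ is visibly $\partial_z$ and the relation $\partial_z A = -(\partial_z^2 F)F^{-1}+(\partial_z F)F^{-1}(\partial_z F)F^{-1}$ matches coefficients with the declared action $T\phi_{-n-1}=(n+1)\phi_{-n}$, and then note that faithfulness of a representation reduces the general statement to this case.
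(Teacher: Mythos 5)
Your proposal is correct and follows essentially the same route as the paper's (much terser) proof: since a commutative vertex algebra is determined by its derivation, one only needs to observe that $T$ on both $\O(\g[[z]])$ and $\O(G[[z]])$ is induced by $\partial_z$, and that the correspondence $A=-(\d F)F^{-1}$ of Lemma \ref{lm_correspondenceSpaces} is a polynomial expression compatible with this derivation. Your write-up merely makes explicit two points the paper leaves implicit — that $T$ preserves the $G$-invariants (because the right $G$-action is constant in $z$) and the reduction of general $G$ to matrix groups via a faithful representation.
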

\begin{proof}
    This is clear because the structure of a commutative vertex algebra is given by its derivation, and this is in both cases the the action of the derivative $\frac{\partial}{\partial z}$ on functionals on $\g[[z]]$ and $G[[z]]$ respectively, and $A=-(\d F) F^{-1}$ is a homogeneous polynomial. 
\end{proof}

\begin{lemma}\label{lm_PoissonModule}
The following Poisson vertex module structure of $\O(\g[[z]])$ on $\O(G[[z]])$ extends the Poisson vertex algebra structure on the vertex subalgebra $\O(\g[[z]])$: 
$$\{a_{-1}^*{_\lambda} \Phi_{-1}^*\}=(F\mapsto -\frac{\partial}{\partial t}\Phi(e^{at}F(0)))|_{t=0}$$
\end{lemma}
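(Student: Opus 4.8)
The plan is to verify the axioms of a Poisson vertex module — sesquilinearity in both slots, the Leibniz rule in the module slot, and the Jacobi (compatibility) identity $\{a_\lambda\{b_\mu m\}\}-\{b_\mu\{a_\lambda m\}\}=\{\{a_\lambda b\}_{\lambda+\mu}m\}$ — and then to check that restricting the module bracket to the subalgebra $\O(\g[[z]])\cong\O(G[[z]])^G\subset\O(G[[z]])$ reproduces the Poisson vertex algebra bracket of the previous section. Since $\O(G[[z]])$ is already a module over the commutative algebra $\O(\g[[z]])$ by multiplication and shares the derivation $T=\partial_z$, the bracket is forced on all of $\O(\g[[z]])\otimes\O(G[[z]])$ by its values on the generating pairs $(a^*_{-1},\Phi_{-1})$: higher modes in the first slot are reached through left-sesquilinearity $\{(Ta^*_{-1}){}_\lambda\,\cdot\}=-\lambda\{a^*_{-1}{}_\lambda\,\cdot\}$, higher modes in the second slot through right-sesquilinearity $\{a^*_{-1}{}_\lambda(T\Phi_{-1})\}=(\lambda+T)\{a^*_{-1}{}_\lambda\Phi_{-1}\}$, and products through the Leibniz rule. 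The first task is to check that this extension is well defined, i.e. that $\{a^*_{-1}{}_\lambda\,\cdot\}$ annihilates the ideal of relations cutting out $G[[z]]\subset\mathrm{GL}_n[[z]]$; this holds because on functions of $F(0)$ the bracket is the honest vector field generating the flow $F(0)\mapsto e^{at}F(0)$, which preserves $G$, and because $T=\partial_z$ preserves the $z$-expansion of the group relations.

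The conceptual core is the observation that $\{a^*_{-1}{}_\lambda\Phi_{-1}\}$ is $\lambda$-independent and equals $(D_a\Phi)_{-1}$, where $D_a\Phi(g):=-\frac{\d}{\d t}\Phi(e^{at}g)|_{t=0}$ is minus the right-invariant vector field on $G$ taking value $a$ at the identity. Because right-invariant vector fields form an anti-homomorphism, the sign in the definition makes $a\mapsto D_a$ a genuine Lie algebra homomorphism from $\g$ to vector fields on $G$, so that $[D_a,D_b]=D_{[a,b]}$. I would then verify Jacobi first on the lowest generators $a^*_{-1},b^*_{-1},\Phi_{-1}$, where every bracket lands in $\lambda$- and $\mu$-degree zero: the left-hand side collapses to $((D_aD_b-D_bD_a)\Phi)_{-1}=(D_{[a,b]}\Phi)_{-1}$, while the right-hand side is $\{[a,b]^*_{-1}{}_{\lambda+\mu}\Phi_{-1}\}=(D_{[a,b]}\Phi)_{-1}$, using the Poisson vertex algebra bracket $\{a^*_{-1}{}_\lambda b^*_{-1}\}=[a,b]^*_{-1}$. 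The two sides agree. Extending to all modes is then routine, since sesquilinearity and the Leibniz rule propagate the Jacobi identity.

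It remains to prove that the module bracket extends the Poisson vertex algebra structure, i.e. that on $\O(\g[[z]])\cong\O(G[[z]])^G$ the restricted action coincides with the adjoint one. First note the action preserves the invariants, since left and right translations commute, so $D_a$ sends right-$G$-invariant functions to right-$G$-invariant functions; hence $\O(\g[[z]])$ is a sub-Poisson-vertex-module. As two Poisson vertex module structures over the same algebra that agree on generators agree everywhere, it suffices to compare $\{a^*_{-1}{}_\lambda\psi_b\}$ with $\{a^*_{-1}{}_\lambda b^*_{-1}\}$, where $\psi_b\in\O(G[[z]])$ is the image of $b^*_{-1}$, namely $\psi_b(F)=b^*(A_0)=b^*(-F_1F_0^{-1})$ for $F=F_0+F_1z+\cdots$. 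This single identity is the main obstacle, and I would settle it by a direct computation in matrix coordinates, applying the Leibniz rule to the product $-F_1F_0^{-1}$. The $\lambda^0$-part works out to $-\frac{\d}{\d t}\,b^*(\mathrm{Ad}_{e^{at}}A_0)|_{t=0}=-b^*([a,A_0])=[a,b]^*(A_0)$ by invariance of the form, reproducing $[a,b]^*_{-1}$; the $\lambda^1$-part, generated through right-sesquilinearity by the $F_1$-dependence, collapses via $\sum_k(F_0)_{lk}(F_0^{-1})_{kj}=\delta_{lj}$ to the scalar $(a,b)$.

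The delicate point, and the step I expect to be hardest, is precisely this control of the $\lambda$-dependent terms: the module bracket is genuinely more than the left-translation action (left translation of the whole series $F(z)$ would account only for its $\lambda^0$-part), and the residual contribution $(a,b)\lambda$ must be matched against the level term of the Poisson vertex algebra, consistently with the $\kappa\to\infty$ description of $\rV^\kappa(\g)$ recorded earlier. Pinning down this normalization and the accompanying mode bookkeeping is where the real care lies; once it is reconciled, the module bracket returns exactly the semiclassical bracket and the lemma follows.
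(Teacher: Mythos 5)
Your core computation coincides with the paper's. The paper's proof likewise disposes of the structural axioms in one sentence (the bracket is a $\g$-action by derivations, compatible with the Lie bracket --- your $[D_a,D_b]=D_{[a,b]}$ for the sign-corrected right-invariant vector fields makes this precise) and then performs exactly your $\lambda^0$-computation: under $F\mapsto e^{at}F$ the element $A=-(\d F)F^{-1}$ transforms by conjugation, and ad-invariance of the form turns $-(b,[a,A_0])$ into $([a,b],A_0)$, so the restriction of the bracket to the invariant subalgebra is the adjoint action. Your verification of well-definedness on the ideal cutting out $G[[z]]$ and of Jacobi on generators fills in steps the paper leaves implicit.

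However, the step you yourself call the delicate point is where the two arguments genuinely part ways, and you should be aware that you and the paper end up proving different statements. The paper does \emph{not} extend the bracket from the generators $\Phi_{-1}$ by sesquilinearity: in its proof it substitutes $\Phi=b^*(-(\d F)F^{-1})$ --- a functional depending on $F_1$, not only on $F(0)$ --- directly into the defining formula, i.e.\ it implicitly reads the bracket as the left-translation vector field acting on arbitrary functionals of the full series $F(z)$. That extension commutes with $T$ and is $\lambda$-independent everywhere, so the paper's restriction is exactly $[a,b]^*_{-1}$ with no level term, matching the bracket $\{a_{-1}{}_{\lambda}b_{-1}\}=[a,b]_{-1}$ stated in the Lemma of Section \ref{sec_connections}; but it violates the axiom $\{a_\lambda (Tm)\}=(\lambda+T)\{a_\lambda m\}$ that you impose. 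Your extension, forced by that axiom, gives on $\Phi_{-2}=T\Phi_{-1}$ the extra term $\lambda(D_a\Phi)_{-1}$, and hence the restriction $[a,b]^*_{-1}+(a,b)\lambda$; your $\lambda^1$-computation is correct, and the result matches the $\kappa\to\infty$ limit of $\rV^\kappa(\g)$ recorded in Section \ref{sec_connections}, but not the level-zero bracket the lemma literally refers to. The paper is internally ambiguous on precisely this point (the Lemma in Section \ref{sec_connections} has no $\lambda$-term, while the semiclassical-limit discussion there does), and the two proofs resolve the ambiguity in opposite directions. So the term $(a,b)\lambda$ is not a normalization to be ``reconciled'' quietly: under the standard Poisson-vertex-module axioms your version is the consistent one, but then the lemma must be restated with the level-shifted bracket on $\O(\g[[z]])$; if instead one adopts the paper's geometric, $T$-commuting extension, the level term disappears but the structure is a $\g[[z]]$-type action rather than a Poisson vertex module in your axiomatic sense. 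State explicitly which extension and which target bracket you use; as written, your final sentence asserts a match that your own computation contradicts for the literal statement being proved.
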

\begin{proof}
The Poisson action is an action of the Lie algebra $\g$, and as such compatible with the Poisson structure, which is the bracket of the Lie algebra. The action is clearly by derivations with respect to the algebra structure.

The main issue is to check that the definition reduces on the image of the embedding in Lemma~\ref{lm_correspondenceSpaces} to the Poisson vertex algebra structure on $\O(\g[[z]])$: Namely, for
 $\Phi=b^*_{-1}(-(\d F)F^{-1})=(b,-(\d F(0))F^{-1}(0))$ with fixed $b\in\g$ we get the Poisson vertex algebra structure on $\O(\g[[z]])$
\begin{align*}
\{a_{-1}^*,\Phi_{-1}\}
&=-\frac{\partial}{\partial t}(b,-e^{at}(\d F(0))F^{-1}(0)e^{-at})|_{t=0})\\
&=-(b,[a,-(\d F(0))F^{-1}(0)]) \\
&=([a,b],-(\d F(0))F^{-1}(0)]).
\qedhere
\end{align*}
\end{proof}

\subsection{Example: \texorpdfstring{$\SL_2$, $\sl_2$}{SL2}}\label{sec_sl2} 

We make the constructions in Section \ref{sec_connections} and \ref{sec_solutions} explicit for $\SL_2,\sl_2$: We denote by $A^*,B^*,C^*,D^*$ the functionals on $\SL_2$ that send a matrix $\begin{psmallmatrix} a & b \\c & d\end{psmallmatrix}$ to the matrix elements $a,b,c,d$. The action of $g\in \SL_2$ by right multiplication gives a left action on functionals $(g.\Phi)(-)=\Phi(-g)$, which explicitly reads for example
$$\begin{pmatrix} 1 & 1 \\ 0 & 1\end{pmatrix}.
\begin{pmatrix} A^* & B^* \\ C^* & D^*\end{pmatrix}
=\begin{pmatrix} A^* & A^*+B^* \\ C^* & C^*+D^*\end{pmatrix}$$

We denote by  
$A^*_{-n-1},B^*_{-n-1},C^*_{-n-1},D^*_{-n-1}$ for $n\geq 0$ the functionals on $\SL_2[[z]]$ that send $F=\sum_{n\geq 0} F_{-n-1} z^{n}$ to the respective matrix element of $F_{-n-1}$. Note that $F_{-n-1}$ is typically not in $\SL_2$. The algebra of functionals is hence explicitly given by setting the functional $\det(-)=1$ as a series in $z$: 
$$\O(\SL_2[[z]])
= \C[A^*_{-n-1},B^*_{-n-1},C^*_{-n-1},D^*_{-n-1}]/(\det_{-1}-1,\det_{-2},\ldots),$$
$$\det_{-n-1}:=\sum_{i+j=n} A^*_{-i-1}D^*_{-j-1}-B^*_{-i-1}C^*_{-j-1}
$$

Similarly, we denote the functionals $a^*,b^*,c^*,d^*$ on $\sl_2$, which satisfy the linear relation $a^*+b^*=0$, and $a_{-n-1}^*,b_{-n-1}^*,c_{-n-1}^*,d^*_{-n-1}$ on $\sl_2[[z]]$, which all satisfy the linear relation $a^*_{-n-1}+b^*_{-n-1}=0$, in particular all coefficients are again in $\sl_2$.

\bigskip
We now compute explicitly the embedding of functionals corresponding to the map sending $F$ to $A=-(\d F)F^{-1}$ in Lemma \ref{lm_correspondenceSpaces}. We use that the explicit formula for the inverse of a matrix in $\SL_2$
\footnote{By definition, the inverse in an algebraic group is a polynomial expression. For $\SL_n$ this explicitly clear by Cramers rule. For $\mathrm{GL}_n$ the algebra of functions contains an additional generator for the determinant.}
\begin{align}\label{formula_explicitCorrespondence}
&\sum_{n\geq 0} 
\begin{pmatrix} a^*_{-n-1}(A) & b^*_{-n-1}(A) \\c^*_{-n-1}(A) & d^*_{-n-1}(A) \end{pmatrix}
z^n  \\
&=-\sum_{i,j\geq 0} 
\begin{pmatrix} A^*_{-i-1}(F) & B^*_{-i-1}(F) \\C^*_{-i-1}(F) & D^*_{-i-1}(F) \end{pmatrix}
iz^{i-1}
\begin{pmatrix} D^*_{-j-1}(F) & -B^*_{-j-1}(F) \\\ -C^*_{-j-1}(F) & A^*_{-j-1}(F) \end{pmatrix}
z^j \nonumber
\end{align}
We read this as a relation between elements in the two rings of functionals, thereby drop denoting the dependency on $A,F$. For example the first term is
\begin{align*}
\begin{pmatrix} a^*_{-1} & b^*_{-1} \\c^*_{-1} & d^*_{-1} \end{pmatrix}
&=-
\begin{pmatrix} A^*_{-2} & B^*_{-2} \\C^*_{-2} & D^*_{-2} \end{pmatrix}
\begin{pmatrix} D^*_{-1} & -B^*_{-1} \\\ -C^*_{-1} & A^*_{-1} \end{pmatrix}\\
&=\begin{pmatrix}
-A^*_{-2}D^*_{-1}+B^*_{-2}C^*_{-1} &
A^*_{-2}B^*_{-1}-B^*_{-2}A^*_{-1}\\
-C^*_{-2}D^*_{-1}+D^*_{-2}C^*_{-1} &
C^*_{-2}B^*_{-1}-D^*_{-2}A^*_{-1}
\end{pmatrix}
\end{align*}
By construction, these four expressions (and all polynomials in them) are $\SL_2$-invariant, for example
$$\begin{pmatrix} 1 & 1 \\ 0 & 1\end{pmatrix}.
a^*_{-1} =
    \begin{pmatrix} 1 & 1 \\ 0 & 1\end{pmatrix}
    .(-A^*_{-2}D^*_{-1}+B^*_{-2}C^*_{-1})
    =(-A^*_{-2}(C^*_{-1}+D^*_{-1})+(A^*_{-2}+B^*_{-2})C^*_{-1})
    =a^*_{-1}$$
It is in this perspective a nontrivial assertion that these expressions are algebraically independent, that they generate the algebra of $\SL_2$-invariant functionals, and that together with polynomials in $A_{-1}^*,B_{-1}^*,C_{-1}^*,D_{-1}^*$ they generate the algebra of all functionals on $\SL_2[[z]]$. 

\bigskip

We also check empirically the statement of Lemma \ref{lm_VOAmatch}, that is, the structures of the vertex algebras of $\O(G[[z]]),\O(\g[[z]])$ are compatible with the embedding in Lemma \ref{lm_correspondenceSpaces}:
\begin{align*}
    \Y(a_{-n-1}^*,z)
    &=\sum_{m\geq 0} a_{-m-1}z^{m-n}{m \choose n}\\
    \Y\big(-\sum_{i+j-1=n}i A_{-i-1}^*D_{-j-1}^*,z\big)
    &=-\sum_{i+j-1=n}i\sum_{k,l\geq 0} A_{-k-1}^*z^{k-i}{k \choose i}D_{-l-1}^* z^{l-j}{l \choose j}\\
    &=-\sum_{m\geq 0}
    \big(\sum_{k+l-1=m} k A_{-k-1}^*D_{-l-1}^*\big)z^{m-n}{m \choose n},
\end{align*}
where we use the easy binomial identity
$$\sum_{i+j-1=n} i{k \choose i}{l \choose j}=
\sum_{i+j-1=n} k{k-1 \choose i-1}{l \choose j}=
k{m\choose n},
$$
and similarly for $\Y\big(\sum_{i+j-1=n}i B_{-i-1}^*C_{-j-1}^*,z\big)$, so that finally we have as expected
$$\Y(a_{-n-1}^*,z)
=\Y\big(-\sum_{i+j-1=n}i A_{-i-1}^*D_{-j-1}^*+ iB_{-i-1}^*C_{-j-1}^*,z\big).$$

\subsection{Automorphisms}\label{sec_automorphisms}

We come back to gauge transformations $\d\mapsto F(d+A)F^{-1}$. This facilitates an action of $G[[z]]$ on $\O(\g[[z]])$. It extends to the action of $G[[z]]$ on $\O(G[[z]])$ by right multiplication (this is clear from Lemma \ref{lm_correspondenceSpaces}).

\begin{definition}
For a given connection we define $\Aut(\d+A)$ as the subgroup of $G[[z]]$ fixing $\d+A$, that is
$$-(\d F)F^{-1}+FAF^{-1}=A.$$
\end{definition}
For matrix algebras we may rewrite 
$$\d F = [F,A].$$

\begin{example}
For $A=0$ the condition reduces to $\d F=0$, so any constant element in $G$ is an automorphism
$$\Aut(\d)=G.$$
\end{example}
Since these are constants, they are compatible with the derivation $T$ on $\O(G[[z]])$, hence: 
\begin{corollary}
The group $G$ acts on the fibre of $\O(G[[z]])$ over the trivial connection $\d$ by vertex algebra automorphisms.
\end{corollary}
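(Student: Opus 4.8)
The plan is to prove the slightly stronger statement that $G=\Aut(\d)$ acts on all of $\O(G[[z]])$ by vertex algebra automorphisms, and then to check that this action preserves, and hence descends to, the fibre over the trivial connection $\d$.

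First I would record three ingredients on $\O(G[[z]])$. (i) The elements $g\in G\subset G[[z]]$ act by right multiplication $R_g\colon F\mapsto Fg$; since $R_g$ is an automorphism of the variety $G[[z]]$, its pullback $R_g^*$ is an automorphism of the commutative algebra $\O(G[[z]])$. (ii) These automorphisms commute with the derivation $T$: indeed $T$ is induced by $\partial_z$ through $\Phi_{-n-1}(F)=\frac{1}{n!}\partial_z^n\Phi(F(z))\vert_{z=0}$, and for constant $g$ one has $(Fg)(z)=F(z)g$, so $\partial_z$ and $R_g$ commute and $R_g^*\circ T=T\circ R_g^*$. This is precisely the point anticipated in the preceding remark, and it is exactly where the constancy of the elements of $\Aut(\d)=G$ enters. (iii) A commutative vertex algebra is determined by its commutative product together with $T$ via $\Y(a,z)b=\sum_{n\ge 0}\frac{z^n}{n!}(T^na)b$; hence any algebra automorphism that commutes with $T$ intertwines all vertex operators and is therefore a vertex algebra automorphism. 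Combining (i)--(iii) shows that $G$ acts on $\O(G[[z]])$ by vertex algebra automorphisms.

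It then remains to descend to the fibre over $\d$. By Corollary \ref{cor_subalgebra} the central subalgebra is the invariant subalgebra $\O(\g[[z]])\cong\O(G[[z]])^G$, which $G$ fixes pointwise; equivalently the projection $\pi(F)=-(\d F)F^{-1}$ satisfies $\pi(Fg)=\pi(F)$ for constant $g$, so right multiplication preserves each fibre of $G[[z]]\to\g[[z]]$. Consequently $G$ stabilizes the ideal generated by the augmentation ideal of the centre, which is the ideal cutting out the fibre over $\d$, so the $G$-action descends to the quotient vertex algebra and acts there by vertex algebra automorphisms.

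I expect essentially no deep obstacle: the whole statement becomes formal once the vertex operator is reconstructed from the commutative product and $T$. The only step demanding any care is the commutation of the $G$-action with $T$ in (ii), and this is immediate from the fact that elements of $\Aut(\d)=G$ are constant in $z$.
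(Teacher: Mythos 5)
Your proof is correct and takes essentially the same approach as the paper, whose entire justification for this corollary is the preceding one-line remark that elements of $\Aut(\d)=G$ are constant and hence compatible with the derivation $T$ on $\O(G[[z]])$. Your steps merely spell out what the paper leaves implicit: right multiplication gives algebra automorphisms, commutation with $T$ makes them vertex algebra automorphisms since a commutative vertex algebra is reconstructed from its product and $T$, and the action fixes the central subalgebra $\O(G[[z]])^G$ pointwise so it descends to the fibre over $\d$.
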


In fact this is the automorphism group for an arbitrary regular connection, because the differential equation has a unique solution with given initial value $F(0)\in G$.

\begin{example}
For $\g=\sl_2$ and constant matrix $A=\begin{pmatrix}
t & 0 \\0 & t^{-1}\end{pmatrix}$ for fixed $t\in\C$ we have the condition
$$\begin{pmatrix}
    A(z)' & B(z)' \\ C(z)' & D(z)'  
\end{pmatrix}
= \begin{pmatrix}
    0 & -B(z)(t-t^{-1}) \\ C(z)(t-t^{-1}) & 0  
\end{pmatrix}.
$$
The solutions give the elements in $\Aut(\d+A)$ to be 
$$F(z)=\begin{pmatrix}
    A & Be^{-(t-t^{-1})z} \\ Ce^{(t-t^{-1})z} & D  
\end{pmatrix}$$
for any constants $A,B,C,D$, and under the condition $AD-BC=1$ clearly $F(z)\in \SL_2$ for all $z$.
\end{example}

For singular connections the automorphism group is generally smaller, because the solutions $F(z)$ lay outside of $G[[z]]$, as the following example demonstrates:

\begin{example}
For $\g=\sl_2$ and $A=\begin{pmatrix}
t & 0 \\0 & t^{-1}\end{pmatrix}/z$ we have the condition
$$\begin{pmatrix}
    A(z)' & B(z)' \\ C(z)' & D(z)'  
\end{pmatrix}
= \begin{pmatrix}
    0 & -B(z)(t-t^{-1})/z \\ C(z)(t-t^{-1})/z & 0  
\end{pmatrix}.
$$
This is solved by the following multivalued function on $\C\backslash \{0\}$
$$F(z)=\begin{pmatrix}
    A & Bz^{-(t-t^{-1})} \\ Cz^{(t-t^{-1})} & D  
\end{pmatrix}.$$
The only automorphisms contained in $\SL_2[[z]]$ are constant diagonal matrices. 
\end{example}

\section{Coupling}\label{sec_Coupling}
\subsection{Definition}\label{sec_CouplingDef}

Let $\V$  be a vertex algebra with an action of a semisimple finite-dimensional Lie group $G$ by automorphisms, denoted by $g.v$.

\begin{definition}\label{def_construction}
We define the \emph{vertex algebra with big center} as the $G$-invariant vertex subalgebra of the product
$$\tilde{\V}:=\big(\O(G[[z]])\otimes \V\big)^G.$$
\end{definition}

Clearly, $\tilde{\V}$ contains the $G$-invariant vertex subalgebras of the two tensor factors, so with the identification in Corollary \ref{cor_subalgebra} 
$$\tilde{V}\supset \O(\g[[z]])\otimes \V^G.$$
We call the first factor the \emph{big central subalgebra} of $\tilde{\V}$.  

\subsection{Description of the fibres}\label{sec_CouplingFibre}

Let $\V$ be a vertex algebra and $G$ a Lie group acting on $\V$ by automorphisms. We assume from now on that the action is locally finite.

\begin{definition}\label{def_delta}
We consider the linear monomorphism 
$$\delta:\,\V\hookrightarrow  \big(\O(G)\otimes \V\big)^G,$$
$$v\mapsto (g\mapsto g^{-1}.v).$$
\end{definition}
    It lands in the $G$-invariant subspace $(\O(G)\otimes \V)^G$ because 
    $$h.\delta(v)=(g\mapsto h.(gh)^{-1}.v)=(g\mapsto g^{-1}.v).$$

Recall that $\O(G)\hookrightarrow \O(G[[z]])$,  as those functionals depending only on $g=F(0)$. 
\begin{definition}\label{def_deltaLoop}
We consider the linear map 
$$\delta:\,\V\hookrightarrow \big(\O(G[[z]])\otimes \V\big)^G,$$
$$v\mapsto (F\mapsto F(0)^{-1}.v)$$
\end{definition}
For a matrix algebra, this can be written
\begin{align}\label{formula_deltaexplicit}
\delta(v)=\sum_{(\Phi,F)} \Phi_{-1}\otimes F^{-1}.v.
\end{align}
where $(\Phi,F)$ runs through some dual basis of matrix elements of $\O(G),G$. We will this more explicitly for $\sl_2$ in Section \ref{sec_sl2}.

\bigskip

Now, recall the big central subalgebra $\O(G[[z]])^G\cong\O(\g[[z]])$ in $\tilde{V}$. For any
 fixed connection $\d+A$ with $A\in \g[[z]]$ we can consider the fibre over $\d+A$, which is isomorphic as a vector space as follows
$$\tilde{V} / (\phi-\phi(A))$$
As a consequence of the exact sequence \eqref{formula_exactsequence} we get our final result

\begin{theorem}\label{thm_fibres}
For any fixed connection $\d+A$ the linear monomorphism $\delta$ in Definition~\ref{def_deltaLoop} induces an isomorphism of vector spaces
$$\V \stackrel{\sim}{\to} \tilde{V} / (\phi-\phi(A))$$
In particular, for $A=0$ this is an isomorphism of vertex algebras.   
\end{theorem}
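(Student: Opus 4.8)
The plan is to establish the vector-space isomorphism by combining the splitting of the exact sequence \eqref{formula_exactsequence} with the identification in Corollary~\ref{cor_subalgebra}, and then handle the $A=0$ case separately since it requires the extra assertion that $\delta$ is multiplicative. For the vector-space statement, the key observation is that quotienting $\tilde{V}=(\O(G[[z]])\otimes\V)^G$ by the ideal generated by $(\phi-\phi(A))$ for all $\phi$ in the central subalgebra $\O(G[[z]])^G\cong\O(\g[[z]])$ amounts to restricting the $\O(G[[z]])$-factor to the fibre of the bundle $G[[z]]\to G[[z]]_e\cong\g[[z]]$ over the point $A$. By Lemma~\ref{lm_correspondenceSpaces} that fibre is a single $G$-torsor (the right $G$-coset of the unique solution $F_A$ with $F_A(0)=e$), so the fibre of the structure sheaf is $\O(G)$ as a $G$-module under right translation.

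\emph{First} I would make precise the claim that $\tilde{V}/(\phi-\phi(A))\cong(\O(G)\otimes\V)^G$ as vector spaces: the central elements $\phi$ generate exactly the functions pulled back from the base $\g[[z]]$, and setting them equal to their value at $A$ geometrically restricts to the fibre over $A$, whose ring of functions is $\O(G)$. \emph{Then} I would invoke the general fact that for a $G$-representation $W$ one has $(\O(G)\otimes W)^G\cong W$ via the averaging/evaluation map $f\mapsto f(e)$, which is exactly the inverse of $\delta$ restricted to this fibre; this uses local finiteness of the $G$-action so that the tensor product and invariants interact correctly. Composing, I obtain the desired linear isomorphism $\V\xrightarrow{\sim}(\O(G)\otimes\V)^G\cong\tilde{V}/(\phi-\phi(A))$, and I would check that this composite is precisely the map induced by $\delta$ from Definition~\ref{def_deltaLoop}, using the explicit formula $v\mapsto(F\mapsto F(0)^{-1}.v)$.

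\emph{For the $A=0$ case}, the claim is stronger: $\delta$ descends to a vertex algebra isomorphism, not merely a linear one. Here I would verify that over the trivial connection the $\O(G[[z]])$-factor contributes no singular or extra regular terms to the operator product expansion, so that the commutator formula of Theorem~\ref{thm_twistedRep} degenerates (the differential polynomials $A^{[s]}_{-k-1}$ vanish for $A=0$, since $F=e$ solves $\d F+0\cdot F=0$, giving $e^{-\int 0}\frac{\d^s}{s!}e^{\int 0}=\delta_{s=0}$) back to the untwisted commutator formula of $\V$. Thus $\delta$ intertwines the vertex operators, making it a homomorphism, and being a linear isomorphism by the first part it is an isomorphism of vertex algebras.

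The main obstacle I expect is the bookkeeping in the first step: carefully justifying that the quotient by the ideal $(\phi-\phi(A))$ really corresponds to fibre restriction and that the resulting ring is $\O(G)$ rather than some thickening, together with confirming that the $G$-invariants behave well under this quotient for an infinite-dimensional (but locally finite) representation $\V$. The torsor structure from Lemma~\ref{lm_correspondenceSpaces} is what makes the fibre a copy of $G$ uniformly in $A$, and the local finiteness assumption is precisely what I would lean on to ensure $(\O(G)\otimes\V)^G\cong\V$; verifying these compatibilities rigorously, rather than the final intertwining computation, is the delicate part.
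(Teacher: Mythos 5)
Your argument for the vector-space isomorphism is correct and is essentially the paper's own proof (the paper offers nothing beyond the remark that the theorem is ``a consequence of the exact sequence \eqref{formula_exactsequence}''). The splitting you use is exactly what that remark encodes: the map $F\mapsto\bigl(-(\d F)F^{-1},\,F(0)\bigr)$ identifies $G[[z]]\cong \g[[z]]\times G$, and it is equivariant for the right regular action because $-(\d (Fg))(Fg)^{-1}=-(\d F)F^{-1}$, so $G$ moves only the fibre coordinate. Hence $\O(G[[z]])\cong\O(\g[[z]])\otimes\O(G)$ with $G$ acting only on the second factor, $\tilde{\V}\cong\O(\g[[z]])\otimes(\O(G)\otimes\V)^G$, the quotient by $(\phi-\phi(A))$ collapses the first factor, and $(\O(G)\otimes\V)^G\cong\V$ via evaluation at $e$, which is inverse to $\delta$; local finiteness is indeed what makes $\delta$ land in the algebraic tensor product. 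This also settles your worry about a possible ``thickening'': the bundle is trivial with a canonical section, so the quotient ring really is $\O(G)$.

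There is one soft spot in your $A=0$ step. You infer ``thus $\delta$ intertwines the vertex operators'' from the degeneration of the commutator formula of Theorem~\ref{thm_twistedRep}. That inference is not valid on its own: the commutator formula only constrains the modes $a_nb$ with $n\geq 0$ (the singular part of the OPE), whereas a vertex algebra homomorphism must respect all modes, including the regular part. Fortunately, the stronger claim you state just before --- that over the trivial connection the $\O(G[[z]])$-factor contributes nothing --- is easy to verify directly and does suffice: the fibre of $G[[z]]\to\g[[z]]$ over $A=0$ consists of the constant functions $F=g$, so every higher functional $\Phi_{-n-1}$ with $n\geq 1$ (the $n$-th Taylor coefficient of $\Phi(F(z))$) restricts to zero on this fibre, and $\Y(\Phi_{-1},z)$ acts there as multiplication by $\Phi$, independent of $z$. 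Then for constant $F=g$ one computes $[\Y(\delta(a),z)\delta(b)](g)=\Y(g^{-1}.a,z)(g^{-1}.b)=g^{-1}.\bigl(\Y(a,z)b\bigr)=[\delta(\Y(a,z)b)](g)$, using that $g$ acts by vertex algebra automorphisms. Replacing your appeal to Theorem~\ref{thm_twistedRep} by this direct check closes the gap, and the rest of your proof stands as written.
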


\subsection{Regularly twisted modules}\label{sec_CouplingTwisted}

We now want to show that a fibre  $\tilde{\V} / (\phi-\phi(A))$ over an arbitrary (regular) connection $\d+A$ gives rise to $(\d+A)$-twisted $\V$-modules - in a sense yet to be defined - with an explicit $(\d+A)$-twisted commutator formula. 

\begin{theorem}\label{thm_twistedRep}
Let $\d+A$ be a fixed regular connection. Consider a vertex module  
$$\Y_\cM:\,\tilde{\V}\otimes \cM\to \cM[[z,z^{-1}]]$$
such that any $\phi$ in the the big central subalgebra $\O(G[[z]])^G\cong \O(\g[[z]])$  acts by the scalar $\phi(A)$. Then the pull-back of $\Y_\cM$ by $\delta$ produces a $\V$-action
$$\Y_\cM:\,\V\otimes \cM\to \cM[[z,z^{-1}]]$$
fulfilling the following $(\d+A)$-twisted commutator formula
\begin{align*}
[a_{-m-1}^{\d+A},b_{-n-1}^{\d+A}]
&=\sum_k\sum_{l<0}\left(
\big(\sum_{r+s=-l-1}{-m-1\choose r}A_{-k-1}^{[s]}.a\big)_{-l-1}b
\right)_{-1-(m+n-l-k )}^{\d+A}\\
\sum_{k\geq 0}A^{[s]}_{-k-1}z^{k}&:=(\phi\mapsto \phi(F(z)\frac{\d^s}{s!} F(z)^{-1}))
=e^{-\int A(z)\,\d z}\frac{\d^s}{s!}e^{\int A(z)\,\d z},
\end{align*}
where $A_{-k-1}^{[s]}$ are coefficients of certain differential polynomials of order $s$ in $A(z)$.

\end{theorem}
For example $A^{[0]}_{-k-1}=\delta_{k,0}$ and $A^{[1]}_{-k-1}=A_{-k-1}$. If the only term appearing is $l=-1$ (that is, we only have a first order pole between $a,b$ and all their images under the action of $G$), then there is no modification to the commutator formula. If there is a second order pole, or $l=-2$, then the modified commutator formula reads
\begin{align*}
[a_{-m-1}^{\d+A},b_{-n-1}^{\d+A}]
&=\sum_{k\geq 0}\left( \big((-m-1)\delta_{k=0}+A_{-k-1}\big).a\big)_{-1}b
\right)_{-1-(m+n-1-k )}^{\d+A}.
\end{align*}

\begin{proof}[Proof of Theorem \ref{thm_twistedRep}]
We abbreviate all vertex operators by $\Y(v,z)=\sum_n v_{-n-1}z^{n}$ and $\Y_\cM(v,z)=\sum_n v_{-n-1}^\cM z^{n}$. Recall that $\delta(v)=(F\mapsto F(0)^{-1}.v)$ and recall that any functional $\phi$ on $G$ is  promoted to a functional $\phi_{-1}(F)=\phi(F(0))$ with $\Y(\phi_{-1},z)=\phi(F(z))$. Thus, as a function in $F$
\begin{align*}
\delta(v)_{-m-1}
&=(F^{-1}_{m-i}.v)_{-i-1}
\end{align*}
where the linear endomorphisms $F^{-1}_k$ of $\V$ are defined by $F(z)^{-1}.v=\sum_{k\geq 0} z^kF^{-1}_k.v$.
\bigskip

Thus the commutator formula for is modified as follows, again as a function in $F$

\begin{align*}
[\delta(a)_{-m-1}^\cM,\delta(b)_{-n-1}^\cM]
&=\sum_{i+i'=m,j+j'=n}[(F^{-1}_i.a)_{-i'-1}^\cM,(F^{-1}_{j}.b)_{-j'-1}^\cM]\\
&=\sum_{i+i'=m,j+j'=n}\sum_{l<0} {-i'-1\choose -l-1}
\big((F^{-1}_{i}.a)_{-l-1}(F^{-1}_{j}.b)\big)_{-1-(i'+j'-l)}^\cM 
\end{align*}
Since $G$ acts as vertex automorphisms we have $$(F(z)^{-1}.a)_{-l-1}(F(w)^{-1}.b)=F(w)^{-1}.\big((F(w)F(z)^{-1}.a)_{-l-1}b\big)$$
Then the above computation continues as follows, and we resort, substitute $i',j'$ and introduce $k=i+(j-t)$ and use the Vandermonde identity on $(-m-1)+i$
\begin{align*}
&[\delta(a)_{-m-1}^\cM,\delta(b)_{-n-1}^\cM]\\
&=\sum_{i+i'=m,j+j'=n}\sum_{t\geq 0}\sum_{l<0} {-i'-1\choose -l-1}\left(
F^{-1}_t.\big((F_{j-t}F^{-1}_{i}.a)_{-l-1}b\big)\right)_{-1-(i'+j'-l)}^\cM  \\
&=\sum_k\sum_{l<0}
\left(\sum_{t\geq 0}F^{-1}_t
\big(\sum_{r+s=-l-1}{-m-1\choose r}\sum_{i+(j-t)=k}F_{(j-t))}
{i \choose s}F^{-1}_{i}.a\big)_{-l-1}b
\right)_{-1-(m+n-l-k-t )}^\cM \\
\intertext{and using the definition of $\delta$ and  substituting $j-t$ }
&=\sum_k\sum_{l<0}\delta\left(
\big(\sum_{r+s=-l-1}{-m-1\choose r}\sum_{i+j=k}F_{j}
{i \choose s}F^{-1}_{i}.a\big)_{-l-1}b
\right)_{-1-(m+n-l-k )}^\cM
\end{align*}

\end{proof}

\subsection{Vice versa: Recovering the \texorpdfstring{$G$}{G}-action}\label{sec_goingback}

We now want to comment on how to go the other way around. The setup is as follows: Suppose $\tilde{\V}$ is a vertex algebra with a big central vertex subalgebra $\O$, then we can look at the fibration of $\tilde{\V}$ over $\mathrm{Spec}(\O)$ with fibre $\tilde{V}_x=\tilde{\V}/(\phi-\phi(x))$, and in particular $\tilde{V}/\O^+$ is a vertex algebra.

Suppose that $\O$ has the additional structure of a Poisson vertex algebra and this extends on $\tilde{\V}$ to the structure of a Poisson vertex module over $\O$. Then we can look the Poisson action of $\O$ on the fibres and in particular on the zero fibre. For each fibre, a certain subalgebra of $\O$ will preserve the respective fibre. 

\begin{example}
We discuss this for the big center vertex algebra we constructed in this article in Definition \ref{def_construction}
$$\tilde{\V}=\big(\O(G[[z]])\otimes \V\big)^G.$$
with big central subalgebra $\O=\O(\g[[z]])$. We have seen in Lemma \ref{lm_PoissonModule} that the Poisson action on the first factor recovers (if exponentiated) the  multiplication of $G$ on $\O(G[[z]])$, and the Poisson action on the second factor $\V$ is trivial by definition. We have  also seen in Section \ref{sec_automorphisms} that the automorphisms of the zero fibre are precisely the constants.  
\bigskip

We have found in Theorem \ref{thm_fibres} that the zero fibre can be identified with $\V$ using $\delta$. Hence we can show that the left contragradient action of $G$ on $\tilde{V}$ from the Poisson module structure recovers the initial action of $G$ on the zero-fibre $\V$ under this identification:
$$g.\delta(v)=g.(F\mapsto F^{-1}(0).v)
=(F\mapsto (g^{-1}F)^{-1}(0).v)
=(F\mapsto F^{-1}(0).(g.v))
=\delta(g.v)$$
\end{example}

\newcommand{\Fnorm}{F^{\mathrm{norm}}}
\newcommand{\CP}{\mathbb{CP}}

\section{Beyond regular connections}\label{sec_beyond}

\subsection{Space of solutions}\label{sec_beyondSolutions}

The bulk of the present article treats regular connections. The case of singular connections is much more interesting and includes the familiar notion of $g$-twisted vertex algebra modules. But there are several difficulties, so we do not aim to develop a full theory at this point, but we want to discuss it and give some indications how to proceed and what to expect and how concrete results can  be obtained and reproduced.

\bigskip

We continue to study $G[[z]]$ as $G$-bundle over $\g[[z]]$. Our interpretation was the space of solutions $F\in G[[z]]$ of the differential equation $(\d+A)F=0$ fibring over the space of connections $\d+A$ with $A\in\g[[z]]$, parametrized by the initial value $F(0)\in G$. If $A(z)$ has now singularities in $z=0$, then the space of solutions is not so easily parametrized. We have discussed this in more depth in \cite{FL24b}.

\bigskip

Let us first consider the \emph{regular singular case} $A=A_0/z+\cdots$. Then it is known that there is a gauge transformation $F\in G[[z]]$ to the normal form 
$$\big(\d+A_0/z\big)\Fnorm(z)=0,$$ 
which is clearly solved by 
$$\Fnorm(z)=e^{-A_0 \log(z)}=z^{-A_0}.$$
\begin{example}\label{exm_Fnorm}
For $\g=\mathfrak{sl}_2$ the solution matrix $\Fnorm$  depends on $A_0$ being diagonalizable or nilpotent
\begin{align*}
\left(\d+\begin{pmatrix} \lambda & 0 \\0 & -\lambda\end{pmatrix}\right)
\Fnorm(z)&=0
&
\Fnorm(z)
&=\begin{pmatrix} z^{-\lambda} & 0 \\0 & z^\lambda\end{pmatrix},\\
%
\left(\d+\begin{pmatrix} 0 & 1 \\0 & 0\end{pmatrix}\right)
\Fnorm(z)&=0
&
\Fnorm(z)
&=\begin{pmatrix}  1 & -\log(z) \\0 & 1\end{pmatrix}.\\
\end{align*}
Solutions of a differential equation involving the second column of the nilpotent case are in literature referred to as \emph{solution of second kind}.
\end{example}
Hence in the regular singular case, we may replace $G[[z]]$ by  the $G[[z]]$-module generated by $\Fnorm$ fibring over the space of connections $A_0/z+\g[[z]]$.

In the irregular case, this becomes even more severe, because there are different preferred bases of solutions depending on the sector by the Stokes phenomenon, as we have reviewed in  \cite{FL24b}. 

\subsection{Singularly twisted modules}\label{sec_beyondTwisted}

Let us revisit the commutative vertex operator for $\O(\g[[t]])$ and $\O(G[[t]])$ in Section \ref{sec_commutative} and more generally $O(X[[t]])$ for any set $X$ from a more geometric perspective. Note that we now switch the notation for the local coordinate from $z$ to $t$. For simplicity we assume $X$ a vector space, by assuming $G$ a matrix group embedded into the vector space of all matrices, but also in the general case the coefficients in $G[[z]]$ can be given meaning, as discussed in Section \ref{sec_solutions}.

\bigskip

Let $\phi$ be a functional on the set of $X$-valued functions on $\CP^1$. If we imagine $\phi$ localized at $z$, then we can write explicitly 
$\phi\in \O^{@z}(X[[t]])=\C[f_{-1}^{@z},f_{-2}^{@z},\ldots]$
in terms of the coefficients $f_n^{@z}$ of an expansion of such a function around $z$, as we did for $z=0$
$$f(t)=\sum_{n\geq 0} f_{-n-1}^{@z} (t-z)^n.$$
Now, if the function $f$ was previously expanded around some $z_1$, then we can re-expand it around $z_2$. In this way we get a pullback map for functionals  
$$\O^{@z_2}(X[[t]])\to \O^{@z_1}(X[[t]]).$$
We make this explicit for $z_1=0$ and $z_2=z$ and with $m=n+k$:
\begin{align*}
f(t)&=\sum_{m\geq 0} f_{-m-1}^{@0} t^m\\
&=\sum_{n,k\geq 0}f_{-n-k-1}^{@0} {n\choose k}z^k(t-z)^n\\
&=\sum_{n\geq 0}f_{-n-1}^{@z} (t-z)^n,\\
f_{-n-1}^{@z}&:=\sum_{k\geq 0}f_{-n-k-1}^{@0}  {n+k\choose n}z^k.\\
\intertext{Of course this coincides with applying directly the Taylor formula to the power series}
f_{-n-1}^{@z}&=\frac{1}{n!}\frac{\partial^n}{\partial t^n}f(t)|_{t=z}.
\end{align*}
 Now left-multiplication with a pulled-back functional reproduces precisely the vertex operator from Section \ref{sec_commutative}:
\begin{align*}
\Y(a_{-n-1},z)
&=\Y(\frac{1}{n!}T^na_{-1},z)
=\frac{1}{n!}\frac{\partial^n}{\partial z^n}
\Y(a_{-1},z)\\
&=\frac{1}{n!}\frac{\partial^n}{\partial z^n}
\sum_{m\geq 0} a_{-m-1} z^m
=
\sum_{k\geq 0}a_{-n-k-1}{n+k\choose n}  z^k 
\end{align*}
 
This suggests to consider also in the singular case  functions of the form $G[[z]]\Fnorm$, which are singular at $z=0$ but regular around generic $z$, as twisted modules over the vertex algebra $\O(G[[z]])$. Note that it is clear from the definition that monodromy of this vertex operator is related to the action of $G$ in the normal form case and some action of $G[[z]]$ in the general case. 

\begin{example}
For $G=\SL_2$ and $\Fnorm(z)
=\begin{psmallmatrix} z^{-\lambda} & 0 \\0 & z^\lambda\end{psmallmatrix}$ 
we get for $F(z)
=\begin{psmallmatrix} A(z) & B(z) \\ C(z) & D(z)\end{psmallmatrix}$

\begin{align*}
\Y(A^*_{-n-1},z)
&=\frac{1}{n!}\frac{\partial^n}{\partial t^n}
(F\Fnorm\mapsto A^*(F(t)\Fnorm(t)))
|_{t=z}\\
&=\frac{1}{n!}\frac{\partial^n}{\partial t^n}
 (F\Fnorm\mapsto A^*\left(
\begin{pmatrix} A(t) & B(t) \\ C(t) & D(t)\end{pmatrix}
\begin{pmatrix} t^{-\lambda} & 0 \\0 & t^\lambda\end{pmatrix}
\right)
|_{t=z}\\
&=\frac{1}{n!}\frac{\partial^n}{\partial t^n}
 (F\Fnorm\mapsto \sum_{m\geq 0}A_{-m-1}t^{m}t^{-\lambda})
|_{t=z}\\
&=
\sum_{m\geq 0} A^*_{-n-k-1} {n+k-\lambda \choose n}z^{n-\lambda},\\
\intertext{and similarly}
\Y(B^*_{-n-1},z)
&=\sum_{m\geq 0} B^*_{-n-k-1} {n+k+\lambda \choose n}z^{n+\lambda}\\
\Y(C^*_{-n-1},z)
&=\sum_{m\geq 0} C^*_{-n-k-1} {n+k-\lambda \choose n}z^{n-\lambda}\\
\Y(D^*_{-n-1},z)
&=\sum_{m\geq 0} D^*_{-n-k-1} {n+k+\lambda \choose n}z^{n+\lambda}.
\end{align*}
\end{example}

It is nice to observe that the $(d+A)$-twisted commutator formulas from Theorem~ \ref{thm_twistedRep} simply continue to hold in this case, and in particular we recover the notion of a $g$-twisted module and we recover the concrete $g$-twisted modules of the symplectic fermions in the next section. 

\begin{example}
Consider the regular singular case with semisimple monodromy in Example~\ref{exm_Fnorm}. We discuss the twisted commutator formula in Theorem \ref{thm_twistedRep}, 
where in this case we compute for $F=\Fnorm=\begin{psmallmatrix} z^{-\lambda} & 0 \\0 & z^\lambda\end{psmallmatrix}$
\begin{align*}
\sum_{k\geq 0}A^{[s]}_{-k-1}z^{k}
&:=(\phi\mapsto \phi(F(z)\frac{\d^s}{s!} F(z)^{-1}))\\
&=\big(\phi\mapsto \phi\big(
\begin{psmallmatrix} {\lambda\choose s}z^{-s} & 0 \\0 & {-\lambda\choose s}z^{-s}\end{psmallmatrix}
\big)\big)
={\lambda a^*_{0}\choose s}
\end{align*}
where in the last equality we plug $\lambda a^*_{-1}$ the polynomial ${X \choose s}$, and this is the only contribution because in the regular singular case $a^*_{k}=0$ for $k>0$. Then the commutator formula simplifies by using the Vandermonde identity backwards
\begin{align*}
[a_{-m-1}^{\d+A},b_{-n-1}^{\d+A}]
&=\sum_k\sum_{l<0}\left(
\big({-m-1+\lambda a_{0}^*\choose -l-1}a\big)_{-l-1}b
\right)_{-1-(m+n-l)}^{\d+A}
\end{align*}
This is precisely the formula in \cite{Bak16} for the $g$-twisted modules of a possibly non-semisimple $g$. A similar match is obtained in the nilpotent case. 
\end{example}

\section{Example: Doublet vertex algebras}\label{sec_Doublet}

We now spell out the construction for $\SL_2,\sl_2$ and a vertex algebra $\V$ generated by a doublet (or standard representation) $\C^2$ with standard basis $x_{-1},y_{-1}$ under this action. the symplectic fermions we have treated explicitly in \cite{FL24}. A similar explicit computation can be done for triplet algebras using the $3$-dimensional representation. 

\bigskip

As discussed in Section \ref{sec_sl2}, the commutative vertex algebra $\O(SL_2)$ consists of polynomials in $A_{-k-1}^*,B_{-k-1}^*,C_{-k-1}^*,D_{-k-1}^*$. We first spell out  Definition \ref{def_construction}
$$\tilde{\V}:=\big(\O(\SL_2[[z]])\otimes \V\big)^{\SL_2},$$
and the linear embedding $\delta$ in Definition \ref{def_delta}
$$\delta:\,\V\to \tilde{V}$$
$$\delta:\,v\mapsto (F\mapsto F^{-1}(0).v),$$
which gives a linear bijection on fibres by Theorem \ref{thm_fibres}. The action of $\SL_2$ on $x_{-1},y_{-1}$ is the standard representation. Then concretely 
\begin{align*}
\delta(x_{-1})
&=\left(\begin{pmatrix}
A(z) & B(z) \\ C(z) & D(z) 
\end{pmatrix}
\mapsto 
\begin{pmatrix}
D_{-1} & -B_{-1} \\ -C_{-1} & A_{-1} 
\end{pmatrix}
.x_{-1}
= D_{-1}x_{-1}-C_{-1}y_{-1} 
\right).
\end{align*}

\begin{corollary}
The image of the doublet generators $x_{-1},y_{-1}$ under $\delta$ are explicitly 
\begin{align*}
\delta(x_{-1})
&=\hphantom{-}D_{-1}^*x_{-1}-C_{-1}^*y_{-1}\\
\delta(y_{-1})
&=-B_{-1}^*x_{-1}+A_{-1}^*y_{-1}
\end{align*}
\end{corollary}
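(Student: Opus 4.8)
The plan is to unwind the definition of $\delta$ from Definition \ref{def_deltaLoop} in the special case $G=\SL_2$ acting by the standard representation, and then to read off the matrix coefficients using Cramer's rule. Since $\delta(v)=(F\mapsto F(0)^{-1}.v)$, the entire content is to make the assignment $F\mapsto F(0)^{-1}$ explicit on the two basis vectors $x_{-1}=e_1$ and $y_{-1}=e_2$ of the standard representation $\C^2$, and this reduces to the computation already displayed for $\delta(x_{-1})$ carried out for both generators.

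First I would identify the value $F(0)$ with the constant term $F_{-1}$ in the expansion $F=\sum_{n\geq 0}F_{-n-1}z^n$. By the definition of the functionals $A^*_{-1},B^*_{-1},C^*_{-1},D^*_{-1}$ in Section \ref{sec_sl2}, the matrix of $F(0)$ evaluated at a point $F$ is precisely $\begin{psmallmatrix} A_{-1}^* & B_{-1}^* \\ C_{-1}^* & D_{-1}^*\end{psmallmatrix}$. Since we work in $\SL_2$, the determinant relation $\det_{-1}=A^*_{-1}D^*_{-1}-B^*_{-1}C^*_{-1}=1$ holds, so Cramer's rule expresses the inverse as a polynomial in the matrix elements,
\begin{align*}
F(0)^{-1}=\begin{pmatrix} D_{-1}^* & -B_{-1}^* \\ -C_{-1}^* & A_{-1}^*\end{pmatrix}.
\end{align*}
Applying this matrix of functionals to the standard basis then gives the result directly: acting on $x_{-1}=e_1$ yields the first column $D_{-1}^*x_{-1}-C_{-1}^*y_{-1}$, and acting on $y_{-1}=e_2$ yields the second column $-B_{-1}^*x_{-1}+A_{-1}^*y_{-1}$.

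There is no genuine obstacle here; the only point requiring care is the bookkeeping of conventions. Specifically, Definition \ref{def_deltaLoop} uses $F(0)^{-1}$ rather than $F(0)$, and the standard representation acts on column vectors by left multiplication, so the images of $x_{-1},y_{-1}$ are read off as the columns of $F(0)^{-1}$ rather than its rows. The signs in the stated formulas are then forced by Cramer's rule for $\SL_2$, and the landing in the $\SL_2$-invariant subspace is guaranteed by the general argument following Definition \ref{def_delta}.
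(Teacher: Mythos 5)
Your proposal is correct and follows the same route as the paper: the paper likewise unwinds $\delta(v)=(F\mapsto F(0)^{-1}.v)$, writes $F(0)^{-1}$ as the adjugate matrix $\begin{psmallmatrix} D_{-1}^* & -B_{-1}^* \\ -C_{-1}^* & A_{-1}^*\end{psmallmatrix}$ via Cramer's rule (using $\det=1$ in $\SL_2$), and reads off the columns on the standard basis vectors $x_{-1},y_{-1}$. Your bookkeeping of the inverse, the column convention, and the signs agrees exactly with the paper's displayed computation preceding the corollary.
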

The operator product expansion between $\delta(x)(z)$ and $\delta(y)(z)$ is modified by the non-singular operator product expansions in $\O(\SL_2[[z]])$. For example
\begin{align*}
D^*(z)C^*(w)
&=\big(D^*_{-1}+D^*_{-2}z+\cdots\big)
\big(C^*_{-1}+C^*_{-2}w+\cdots\big)\\
&\vphantom{X^{X^{X^X}}}=(z-w)^0 \big(D^*_{-1}C^*_{-1}+w(D^*_{-1}C^*_{-2}+D^*_{-2}C^*_{-1})+w^2(2D^*_{-3}C^*_{-1}+D^*_{-2}C^*_{-2}+\cdots\big)\cdots\\
&+(z-w)^1 \big(D^*_{-2}C^*_{-1}+wD^*_{-2}C^*_{-2}+\cdots\big)\\
&+(z-w)^2\cdots
\end{align*}
As an example, we now assume that $\V$ is 
the vertex algebra of symplectic fermions with operator product expansion 
$$x(z)\,y(w)\sim (z-w)^{-2}1$$
and $x(z)\,x(w)$ and $y(z)\,y(w)$ zero. 
We can then explicitly compute the modified operator product expansion of $\delta(x),\delta(y)$
 and express it in terms of $\O(\g[[z]])$ by formula \eqref{formula_explicitCorrespondence}. 

\begin{align*}
\delta(x)(z)\,\delta(y)(w)
&=\big(D^*(z)x(z)-C^*(z)y(z)\big)
\big(-B^*(w)x(w)+A^*(w)y(w)\big)\\
&=(z-w)^{-2} \left(
(z-w)^0\big(
D^*_{-1}A^*_{-1}+w(D^*_{-1}A^*_{-2}+D^*_{-2}A^*_{-1})+\cdots\big)\right.\\
&\hphantom{=(z-w)^{-2}}\left.+(z-w)^1 \big(D^*_{-2}A^*_{-1}+wD^*_{-2}A^*_{-2}+\cdots\big)+\cdots\right) \\
&\,-(z-w)^{-2} \left(
(z-w)^0\big(
C^*_{-1}B^*_{-1}+w(C^*_{-1}B^*_{-2}+C^*_{-2}B^*_{-1})-\cdots\big)\right.\\
&\hphantom{=(z-w)^{-2}}\left.+(z-w)^1 \big(C^*_{-2}B^*_{-1}+wC^*_{-2}B^*_{-2}-\cdots\big)+\cdots\right) \\
&=(z-w)^{-2}\det(w)-
(z-w)^{-1} d^*(w)
\\
&\\
\delta(x)(z)\,\delta(x)(w)
&=\big(D^*(z)x(z)-C^*(z)y(z)\big)
\big(D^*(w)x(w)-C^*(w)y(w)\big)\\
&=(z-w)^{-2} \left(
(z-w)^0\big(
-D^*_{-1}C^*_{-1}-w(D^*_{-1}C^*_{-2}+D^*_{-2}C^*_{-1})-\cdots\big)\right.\\
&\hphantom{=(z-w)^{-2}}\left.+(z-w)^1 \big(-D^*_{-2}C^*_{-1}-wD^*_{-2}C^*_{-2}-\cdots\big)+\cdots\right) \\
&-(z-w)^{-2} \left(
(z-w)^0\big(
-C^*_{-1}D^*_{-1}-w(C^*_{-1}D^*_{-2}+C^*_{-2}D^*_{-1})-\cdots\big)\right.\\
&\hphantom{=(z-w)^{-2}}\left.+(z-w)^1 \big(-C^*_{-2}D^*_{-1}-wC^*_{-2}D^*_{-2}-\cdots\big)+\cdots\right) \\
&=-(z-w)^{-1} c_{-1}^*\\
&\\
\delta(y)(z)\,\delta(y)(w)
&=\big(-B^*(z)x(z)+A^*(z)y(z)\big)
\big(-B^*(w)x(w)+A^*(w)y(w)\big)\\
&=(z-w)^{-2} \left(
(z-w)^0\big(
-B^*_{-1}A^*_{-1}-w(B^*_{-1}A^*_{-2}+B^*_{-2}A^*_{-1})-\cdots\big)\right.\\
&\hphantom{=(z-w)^{-2}}\left.+(z-w)^1 \big(-B^*_{-2}A^*_{-1}-wB^*_{-2}A^*_{-2}-\cdots\big)+\cdots\right) \\
&-(z-w)^{-2} \left(
(z-w)^0\big(
-A^*_{-1}B^*_{-1}-w(A^*_{-1}B^*_{-2}+A^*_{-2}B^*_{-1})-\cdots\big)\right.\\
&\hphantom{=(z-w)^{-2}}\left.+(z-w)^1 \big(-A^*_{-2}B^*_{-1}-wA^*_{-2}B^*_{-2}-\cdots\big)+\cdots\right) \\
&=(z-w)^{-1} b_{-1}^*
\end{align*}
This reproduces the deformed operator products expansions which we read off the large level limit of the formulas \cite{CGL20} p. 16, and it reproduces the formulas in our previous article \cite{FL24}. 

Alternatively, we can compute the modification of the commutator formula in Theorem~\ref{thm_twistedRep} for $l=-2$
\begin{align*}
[x_{-m-1}^{\d+A},y_{-n-1}^{\d+A}]
&=\sum_{k\geq 0}\left( \big((-m-1)\delta_{k=0}+A_{-k-1}\big).x\big)_{1}y
\right)_{-1-(m+n-1-k )}^{\d+A},
\end{align*}
as we have already done in \cites{FL24,FL24b}.

\begin{bibdiv}
\begin{biblist}

\bib{FF88}{article}{
   author={Fe\u igin, B. L.},
   author={Fuks, D. B.},
   title={Cohomology of Lie groups and Lie algebras},
   language={Russian},
   conference={
      title={Current problems in mathematics. Fundamental directions, Vol.\
      21 (Russian)},
   },
   book={
      series={Itogi Nauki i Tekhniki},
      publisher={Akad. Nauk SSSR, Vsesoyuz. Inst. Nauchn. i Tekhn. Inform.,
   Moscow},
   },
   date={1988},
   pages={121--209, 215},
   review={\MR{0968446}},
}
\bib{MS88}{article}{
   author={Moore, Gregory},
   author={Seiberg, Nathan},
   title={Polynomial equations for rational conformal field theories},
   journal={Phys. Lett. B},
   volume={212},
   date={1988},
   number={4},
   pages={451--460},
   issn={0370-2693},
   review={\MR{0962600}},
   doi={10.1016/0370-2693(88)91796-0},
}
\bib{Verl88}{article}{
   author={Verlinde, Erik},
   title={Fusion rules and modular transformations in $2$D conformal field
   theory},
   journal={Nuclear Phys. B},
   volume={300},
   date={1988},
   number={3},
   pages={360--376},
   issn={0550-3213},
   review={\MR{0954762}},
   doi={10.1016/0550-3213(88)90603-7},
}
\bib{Gab94}{article}{
   author={Gaberdiel, Matthias},
   title={Fusion in conformal field theory as the tensor product of the
   symmetry algebra},
   journal={Internat. J. Modern Phys. A},
   volume={9},
   date={1994},
   number={26},
   pages={4619--4636},
   issn={0217-751X},
   review={\MR{1295760}},
   doi={10.1142/S0217751X94001849},
}
\bib{KL93}{article}{
   author={Kazhdan, D.},
   author={Lusztig, G.},
   title={Tensor structures arising from affine Lie algebras. I, II},
   journal={J. Amer. Math. Soc.},
   volume={6},
   date={1993},
   number={4},
   pages={905--947, 949--1011},
   issn={0894-0347},
   review={\MR{1186962}},
   doi={10.2307/2152745},
}
\bib{HL94}{article}{
   author={Huang, Yi-Zhi},
   author={Lepowsky, James},
   title={Tensor products of modules for a vertex operator algebra and
   vertex tensor categories},
   conference={
      title={Lie theory and geometry},
   },
   book={
      series={Progr. Math.},
      volume={123},
      publisher={Birkh\"auser Boston, Boston, MA},
   },
   isbn={0-8176-3761-3},
   date={1994},
   pages={349--383},
   review={\MR{1327541}},
   doi={10.1007/978-1-4612-0261-5\_13},
}
\bib{HLZ06}{article}{
   author={Huang, Yi-Zhi},
   author={Lepowsky, James},
   author={Zhang, Lin},
   title={A logarithmic generalization of tensor product theory for modules
   for a vertex operator algebra},
   journal={Internat. J. Math.},
   volume={17},
   date={2006},
   number={8},
   pages={975--1012},
   issn={0129-167X},
   review={\MR{2261644}},
   doi={10.1142/S0129167X06003758},
}

\bib{Gait15}{article}{
   author={Gaitsgory, Dennis},
   title={Sheaves of categories and the notion of 1-affineness},
   conference={
      title={Stacks and categories in geometry, topology, and algebra},
   },
   book={
      series={Contemp. Math.},
      volume={643},
      publisher={Amer. Math. Soc., Providence, RI},
   },
   isbn={978-1-4704-1557-0},
   date={2015},
   pages={127--225},
   review={\MR{3381473}},
   doi={10.1090/conm/643/12899},
}


\bib{FF92}{article}{
   author={Feigin, Boris},
   author={Frenkel, Edward},
   title={Affine Kac-Moody algebras at the critical level and Gel\cprime
   fand-Diki\u i\ algebras},
   conference={
      title={Infinite analysis, Part A, B},
      address={Kyoto},
      date={1991},
   },
   book={
      series={Adv. Ser. Math. Phys.},
      volume={16},
      publisher={World Sci. Publ., River Edge, NJ},
   },
   isbn={981-02-0955-X},
   date={1992},
   pages={197--215},
   review={\MR{1187549}},
   doi={10.1142/s0217751x92003781},
}

\bib{FL24}{article}{
    author={Feigin, Boris L.},
    author={Lentner, Simon D.},
    title={Vertex algebras with big centre and a Kazhdan-Lusztig correspondence},
    journal={Advances in Mathematics},
    volume={457},
    date={2024},
}
\bib{FL24b}{arXiv}{
  author={Feigin, Boris},
  author={Lentner, Simon},
  title={Twisted vertex algebra modules for irregular connections: A case study},
  date={2024},
  eprint={2411.16272},
  archiveprefix={arXiv},
}

\bib{Len25}{arXiv}{
  author={Lentner, Simon},
  title={A conditional algebraic proof of the logarithmic Kazhdan-Lusztig correspondence},
  date={2025},
  eprint={2501.10735},
  archiveprefix={arXiv},
}

	\bibitem[CGL20]{CGL20} T. Creutzig, D. Gaiotto, A.R. Linshaw: S-duality for the Large N = 4 Superconformal Algebra. Commun. Math. Phys. 374, 1787–1808 (2020)

\bib{DCKP92}{article}{
   author={De Concini, C.},
   author={Kac, V. G.},
   author={Procesi, C.},
   title={Quantum coadjoint action},
   journal={J. Amer. Math. Soc.},
   volume={5},
   date={1992},
   number={1},
   pages={151--189},
   issn={0894-0347},
   review={\MR{1124981}},
   doi={10.2307/2152754},
}
\bib{TLX23}{article}{
   author={Toledano Laredo, Valerio},
   author={Xu, Xiaomeng},
   title={Stokes phenomena, Poisson-Lie groups and quantum groups},
   journal={Adv. Math.},
   volume={429},
   date={2023},
   pages={Paper No. 109189, 34},
   issn={0001-8708},
   review={\MR{4609467}},
   doi={10.1016/j.aim.2023.109189},
}

\bib{FBZ04}{book}{
   author={Frenkel, Edward},
   author={Ben-Zvi, David},
   title={Vertex algebras and algebraic curves},
   series={Mathematical Surveys and Monographs},
   volume={88},
   edition={2},
   publisher={American Mathematical Society, Providence, RI},
   date={2004},
   pages={xiv+400},
   isbn={0-8218-3674-9},
   review={\MR{2082709}},
   doi={10.1090/surv/088},
}
\bib{Kac97}{book}{
   author={Kac, Victor},
   title={Vertex algebras for beginners},
   series={University Lecture Series},
   volume={10},
   publisher={American Mathematical Society, Providence, RI},
   date={1997},
   pages={viii+141},
   isbn={0-8218-0643-2},
   review={\MR{1417941}},
   doi={10.1090/ulect/010},
}
\bib{FLM88}{book}{ 
   author={Frenkel, Igor},
   author={Lepowsky, James},
   author={Meurman, Arne},
   title={Vertex operator algebras and the Monster},
   series={Pure and Applied Mathematics},
   volume={134},
   publisher={Academic Press, Inc., Boston, MA},
   date={1988},
   pages={liv+508},
   isbn={0-12-267065-5},
   review={\MR{0996026}},
}
\bib{Huang10}{article}{ 
   author={Huang, Yi-Zhi},
   title={Generalized twisted modules associated to general automorphisms of
   a vertex operator algebra},
   journal={Comm. Math. Phys.},
   volume={298},
   date={2010},
   number={1},
   pages={265--292},
   issn={0010-3616},
   review={\MR{2657819}},
   doi={10.1007/s00220-010-0999-6},
}
\bib{Bak16}{article}{ 
   author={Bakalov, Bojko},
   title={Twisted logarithmic modules of vertex algebras},
   journal={Comm. Math. Phys.},
   volume={345},
   date={2016},
   number={1},
   pages={355--383},
   issn={0010-3616},
   review={\MR{3509017}},
   doi={10.1007/s00220-015-2503-9},
}
\bib{DLM96}{article}{
   author={Dong, Chongying},
   author={Li, Haisheng},
   author={Mason, Geoffrey},
   title={Simple currents and extensions of vertex operator algebras},
   journal={Comm. Math. Phys.},
   volume={180},
   date={1996},
   number={3},
   pages={671--707},
   issn={0010-3616},
   review={\MR{1408523}},
}
\bib{Li97}{article}{ 
   author={Li, Haisheng},
   title={The physics superselection principle in vertex operator algebra
   theory},
   journal={J. Algebra},
   volume={196},
   date={1997},
   number={2},
   pages={436--457},
   issn={0021-8693},
   review={\MR{1475118}},
   doi={10.1006/jabr.1997.7126},
}
\bib{AM09}{article}{
   author={Adamovi\'c, Dra\v zen},
   author={Milas, Antun},
   title={Lattice construction of logarithmic modules for certain vertex
   algebras},
   journal={Selecta Math. (N.S.)},
   volume={15},
   date={2009},
   number={4},
   pages={535--561},
   issn={1022-1824},
   review={\MR{2565050}},
   doi={10.1007/s00029-009-0009-z},
}

\end{biblist}
\end{bibdiv}

\end{document}